\newtheorem{theorem}{Theorem}[section]
\newtheorem{lemma}[theorem]{Lemma}
\newtheorem{corollary}[theorem]{Corollary}
\theoremstyle{definition}
\newtheorem{definition}[theorem]{Definition}
\theoremstyle{remark}
\newtheorem{remark}[theorem]{Remark}
\numberwithin{equation}{section}
\DeclareMathOperator{\diver}{div}
\DeclareMathOperator{\supp}{supp}
\DeclareMathOperator{\loc}{loc}
\def\RR{\mathbb{R}}
\newcommand{\R}{\mathbb{R}}
\newcommand{\HH}{\mathbb{H}}
\begin{document}

\setcounter{page}{1}

\title[Higher-order evolution inequalities on the  Kor\'{a}nyi  ball]{Higher-order evolution inequalities with Hardy potential on the  Kor\'{a}nyi ball}

\author[M. Jleli, M. Ruzhansky, B. Samet, B. T. Torebek]{Mohamed Jleli, Michael Ruzhansky, Bessem Samet, Berikbol T. Torebek$^*$}

\address{\textcolor[rgb]{0.00,0.00,0.84}{Mohamed Jleli \newline Department of Mathematics, College of Science, King Saud University, \newline Riyadh 11451, Saudi Arabia}}
\email{\textcolor[rgb]{0.00,0.00,0.84}{jleli@ksu.edu.sa}}
\address{\textcolor[rgb]{0.00,0.00,0.84}{Michael Ruzhansky  \newline Department of Mathematics: Analysis, Logic and Discrete Mathematics \newline Ghent University, Belgium
\newline and
\newline School of Mathematical Sciences \newline Queen Mary University of London \newline United Kingdom}}
\email{\textcolor[rgb]{0.00,0.00,0.84}{michael.ruzhansky@ugent.be}}
\address{\textcolor[rgb]{0.00,0.00,0.84}{Bessem Samet \newline Department of Mathematics, College of Science, King Saud University, \newline Riyadh 11451, Saudi Arabia}}
\email{\textcolor[rgb]{0.00,0.00,0.84}{bsamet@ksu.edu.sa}}
\address{\textcolor[rgb]{0.00,0.00,0.84}{Berikbol T. Torebek \newline Department of Mathematics: Analysis,
Logic and Discrete Mathematics \newline Ghent University, Krijgslaan 281, Ghent, Belgium \newline and \newline Institute of
Mathematics and Mathematical Modeling \newline 125 Pushkin str.,
050010 Almaty, Kazakhstan}}
\email{\textcolor[rgb]{0.00,0.00,0.84}{berikbol.torebek@ugent.be}}

%\dedicatory{This paper is dedicated to Professor ABCD}
%\thanks{This research has been funded by the Science Committee of the Ministry of Education and Science of the Republic of Kazakhstan (Grant No. AP08052046). The research of Torebek is financially supported by the FWO Odysseus 1 grant G.0H94.18N: Analysis and Partial Differential Equations. No new data was collected or generated during the course of research.}

\let\thefootnote\relax\footnote{$^{*}$Corresponding author}

\subjclass[2010]{35R03; 35A01; 	35B33.}

\keywords{Higher order evolution inequalities; Heisenberg group; inverse-square potential; nonexistence; critical exponent.}

\begin{abstract} We consider a higher order in (time) semilinear evolution inequality posed on the Kor\'{a}nyi ball under an inhomogeneous Dirichlet-type boundary condition. The problem involves an
inverse-square  potential $\lambda/|\xi|_\HH^2$, where $\lambda \geq -(Q-2)^2/4$ and a general weight function $V$ depending on the space variable  in front of the power nonlinearity. We first establish a general nonexistence result for the considered problem. Next, in the special case $V(\xi):=|\xi|_\HH^a$, $a\in \RR$, we prove the sharpness of our nonexistence result and show that the problem admits three different critical behaviors according to the value of the parameter $\lambda$.
\end{abstract}
\maketitle

\tableofcontents

\section{Introduction}
The purpose of this paper is to study the  existence and nonexistence of weak solutions to evolution inequalities of the form
\begin{equation}\label{P}
\frac{\partial^ku}{\partial t^k}-\frac{1}{\psi}\Delta_\HH u+\frac{\lambda}{|\xi|_\HH^2}u\geq V |u|^p\quad \mbox{in } (0,\infty)\times B_\HH, 	
\end{equation}
where $u=u(t,\xi)$, $k\geq1 $ is an integer, $B_\HH$ is the unit Kor\'{a}nyi ball of the Heisenberg group $\HH^N=(\R^{2N+1},\circ)$,  $\lambda \geq -\left(\frac{Q-2}{2}\right)^2$, $Q=2N+2$ is the homogeneous dimension of $\HH^N$, $p>1$,  $V=V(\xi)$ is a measurable function and $V>0$ a.e. in $B_\HH$. The term $\psi$ and $\Delta_\HH$ (the Kohn-Laplacian operator) will be defined in the next section. Notice that $\left(\frac{Q-2}{2}\right)^2$ is the best constant in the Heisenberg version of the Hardy inequality (see e.g. \cite{GA}, and also \cite{RUS})
$$\int\limits_{\mathbb{H}^N}|\nabla_{\mathbb{H}}v(\xi)|^2d\xi\geq \left(\frac{Q-2}{2}\right)^2\int\limits_{\mathbb{H}^N}\frac{|v(\xi)|^2}{|\xi|^2_{\mathbb{H}}}d\xi,\,\,v\in C^\infty_c\left(\mathbb{H}^N\right),$$
where $\nabla_{\mathbb{H}}$ is a horizontal gradient on $\mathbb{H}^N$ (will be defined in the next section).

We shall study \eqref{P} under the inhomogeneous Dirichlet-type boundary condition
\begin{equation}\label{BC}
u\geq f \quad \mbox{on }(0,\infty)\times \partial B_\HH, 	
\end{equation}
where $f=f(\xi)\in L^1(\partial B_\HH)$.

The study of existence and nonexistence for elliptic problems involving a Hardy potential in bounded and unbounded domains of $\RR^N$ has been considered in several works. For instance, Brezis-Dupaigne-Tesei \cite{BR}
investigated  the existence and nonexistence of solutions $u\geq 0$ to the elliptic equation
\begin{equation}\label{P-Brezis}
-\Delta u -\frac{\lambda}{|x|^2}u= u^p
\end{equation}
in a ball $B(0,R)$ of $\RR^N$, $N\geq 3$, where $p>1$ and  $0<\lambda\leq \left(\frac{N-2}{2}\right)^2$.  They proved the following results. Let $\sigma:=\frac{N-2}{2}-\sqrt{\left(\frac{N-2}{2}\right)^2-\lambda}$.
\begin{itemize}
\item[(i)] 	If $1<p<1+\displaystyle\frac{2}{\sigma}$, then there exists a nontrivial solution to \eqref{P-Brezis} in $\mathcal{D}'(B(0,R))$ with $u^p$ and $\displaystyle\frac{u}{|x|^2}\in L^1(B(0,R))$;
\item[(ii)] If $p\geq 1+\displaystyle\frac{2}{\sigma}$ and $u\in L^p_{\loc}(B(0,R)\backslash\{0\})$, $u\geq 0$, is a supersolution to \eqref{P-Brezis} in $\mathcal{D}'(B(0,R)\backslash\{0\})$, then $u\equiv 0$.
\end{itemize}
Problems of type \eqref{P-Brezis} have been also studied in exterior domains  \cite{Chen,CI,MO,WA} and cone-like domains \cite{FA,GH,LA,LI}. Further references in this direction are \cite{AB,Chen2,Chen3,Chen4,DU,FA2}.

Evolution problems involving a Hardy potential in bounded and unbounded domains of $\RR^N$ have been also studied by several authors. For instance, Baras and Golshtein \cite{BG84} discovered a critical behavior of the linear Cauchy problem
\begin{equation}\label{BG1}
\left\{
\begin{array}{l}
u_t=\Delta u-\frac{\lambda}{|x|^2}u, \,t>0,\,x\in\mathbb{R}^N,\\{}\\
u(0,x)=u_0(x),\,x\in \mathbb{R}^N.
\end{array}
\right.
\end{equation}
They proved that:
\begin{itemize}
\item[(i)] if $\lambda<-\left(\frac{N-2}{2}\right)^2$ then the problem \eqref{BG1} has no nonnegative nontrivial solutions;
\item[(ii)] if $\lambda\geq-\left(\frac{N-2}{2}\right)^2,$ then there exist a positive weak solution of the problem \eqref{BG1}.
\end{itemize}
Some extensions and developments of above results can be found in the papers \cite{Brezis1, Cabre, Gold1, Gold2, Gold3, Iagar, Kombe, Zhang, Vaz}.

In \cite{HA} Hamidi and Laptev considered the higher-order evolution inequality
\begin{equation}\label{P-HL}
\frac{\partial^ku}{\partial t^k}-\Delta u+\frac{\lambda}{|x|^2}u\geq |u|^p\quad \mbox{in }(0,\infty)\times \mathbb{R}^N
\end{equation}
subject to the initial condition
\begin{equation}\label{IC-HA}
\frac{\partial^{k-1}u}{\partial t^{k-1}}(0,x)\geq 0\quad \mbox{in }  \mathbb{R}^N,
\end{equation}
where $N\geq 3$, $\lambda \geq -\left(\frac{N-2}{2}\right)^2$ and $p>1$. They proved that,   if one of the following assumptions  is satisfied:
$$
\lambda\geq 0,\,\, 1<p\leq 1+\frac{2}{\frac{2}{k}+s^*};
$$
or
$$
-\left(\frac{N-2}{2}\right)^2\leq \lambda<0,\,\, 1<p\leq 1+\frac{2}{\frac{2}{k}-s_*},
$$
where
$$
s^*=\frac{N-2}{2}+\sqrt{\lambda+\left(\frac{N-2}{2}\right)^2}, \,  s_*=s^*+2-N,
$$
then \eqref{P-HL}--\eqref{IC-HA} admits no nontrivial weak solution. Recently, an improvement of the above result has been obtained in \cite{Jleli-Samet}. Namely, it was proved that, if $k\geq 2$ and
$$
\frac{2}{k}\left(N-2+\frac{2}{k}\right)\leq \lambda<2N,
$$
then for all $p>1$, \eqref{P-HL}--\eqref{IC-HA} admits no nontrivial weak solution. We also refer to  \cite{JSV} , where \eqref{P-HL} has been studied in an exterior domain of $\RR^N$ under various types of inhomogeneous boundary conditions. Other related works can be found in   \cite{ABD2,ABD3,ABD4}.

Elliptic and evolution problems in the Heisenberg group have been widely investigated, see e.g.
\cite{BI,BOD,BO,Ruz1,Ruz2,DA,FI,GA2,JKS,Kirane,PO,RU,Ruz3,ZI} and the references therein. In particular,  Zixia and  Pengcheng \cite{ZI} extended the obtained results in \cite{HA} to the Heisenberg group. Namely, they considered the problem
\begin{equation}\label{P-Z}
\frac{\partial^ku}{\partial t^k}-\Delta_\HH u+\frac{\lambda \psi}{|\xi|_\HH^2}u\geq |\xi|_\HH^\sigma |u|^p\quad \mbox{in } (0,\infty)\times \HH^N
\end{equation}
subject to the initial condition
\begin{equation}\label{IC-Z}
\frac{\partial^{k-1}u}{\partial t^{k-1}}(0,\xi)\geq 0\quad \mbox{in }  \HH^N,
\end{equation}
where $p>1$, $-2< \sigma\leq 0$ and $\lambda \geq -\left(\frac{Q-2}{2}\right)^2$. They proved that,   if one of the following assumptions  is satisfied:
$$
\lambda\geq 0,\,\, 1<p\leq 1+\frac{2+\sigma}{\frac{2}{k}+s^*};
$$
or
$$
-\left(\frac{Q-2}{2}\right)^2\leq \lambda<0,\,\, 1<p\leq 1+\frac{2+\sigma}{\frac{2}{k}-s_*},
$$
where
$$
s^*=\frac{Q-2}{2}+\sqrt{\lambda+\left(\frac{Q-2}{2}\right)^2}, \,  s_*=s^*+2-Q,
$$
then \eqref{P-Z}--\eqref{IC-Z} admits no nontrivial weak solution.

To the best of our knowledge, the study of existence and nonexistence for evolution inequalities with an inverse-square potential  on bounded domains of the Heisenberg group has not been previously considered in the literature. Motivated by this fact, problem \eqref{P} under the boundary condition \eqref{BC} is investigated in this paper.

The rest of the paper is organized as follows.  In Section \ref{sec2}, we recall some notions and properties related to the Heisenberg group. In Section \ref{sec3}, after defining weak solutions to \eqref{P}--\eqref{BC}, we state our main results and discuss some special cases of the weight function $V$. Section \ref{sec4} is devoted to some auxiliary results. Finally, the proofs of our main results are provided in Section \ref{sec5}.

Let us say some words about  our approach. The proofs of the nonexistence results make use of the nonlinear capacity method (see e.g. \cite{MI})  that requires the construction  of appropriate test functions. This construction is essentially related to the considered domain, the differential operator  $\frac{1}{\psi}\Delta_\HH+\frac{\lambda}{|\xi|_\HH^2}$ and the boundary condition \eqref{BC}. Our existence results are established by the construction of explicit solutions.

Throughout this paper, the letter $C$ denotes a generic positive constant independent of the scaling parameters $T,R$ and the solution $u$.   The value of $C$ is not necessarily the same  from one line to         another. The notation  $s\gg 1$, where $s>0$,   means that  $s$ is sufficiently large.

\section{Preliminaries}\label{sec2}

For the reader's convenience, we recall  in this section some notions and properties related to the Heisenberg group. For more details, we refer to \cite{AB,BI,DA,FI,FO,GA} and the references therein.

The Heisenberg group group $\HH^N$, $N\geq 1$,  is the set $\R^{2N+1}$ equipped  with the group law $\circ$ given by
$$
\xi\circ \xi':=\left(x+x',y+y',\phi+\phi'+2 \sum_{i=1}^N\left(x_i'y_i-x_iy_i'\right)\right),\quad \xi,\xi'\in \R^{2N+1},
$$
where $$\xi=(x_1,\cdots,x_N,y_1,\cdots,y_N,\phi)=(x,y,\phi)$$ and $$\xi'=(x_1',\cdots,x_N',y_1',\cdots,y_N',\phi')=(x',y',\phi').$$   It can be easily seen that for all $\xi\in \HH^N$,
$\xi^{-1}:=-\xi$ is the inverse of $\xi$ with respect to $\circ$.

In $\HH^N$, we define the norm
$$
|\xi|_{\HH}:=\left(\left(\sum_{i=1}^N(x_i^2+y_i^2)\right)^2 +\phi^2\right)^{\frac{1}{4}},\quad \xi=(x,y,\phi)\in \HH^N. 
$$
The mapping $d_\HH: \HH^N\times \HH^N\to [0,\infty)$ defined by
$$
d_\HH(\xi,\xi'):=\left|\xi'^{-1}\circ \xi\right|_{\HH}
$$
is a distance on $\HH^N$.  The Kor\'{a}nyi ball {with} center $\xi\in \HH^N$ and radius $R>0$ is given by 
$$
B_{\HH}(\xi,R):=\left\{\eta\in \HH^N: d_{\HH}(\xi,\eta)<R\right\}.
$$
The unit Kor\'{a}nyi ball is denoted by $B_\HH$, that is, $B_\HH:=B_\HH(0,1)$. 

Let $|\cdot|$ be  {the measure on $\HH^N$, which is Lebesgue measure on $\R^{2N+1}$}.  For all $\xi\in \HH^N$ and $R>0$, we have
$$
|B_{\HH}(\xi,R)|=|B_{\HH}(0,R)|=R^Q|B_{\HH}(0,1)|,
$$
where $Q:=2N+2$ is {the homogeneous} dimension of $\HH^N$. 

Throughout this paper, for all $\xi=(x,y,\phi)\in \HH^N\backslash\{0\}$, we shall use the notation
$$
\psi:=\frac{|x|^2+|y|^2}{|\xi|_{\HH}^2}.
$$
Let $0\leq R_1<R_2\leq \infty$ and $f\in L^1\left(B_{\HH}(0,R_2)\backslash\overline{B_{\HH}(0,R_1)}\right)$. If 
$$
f(\xi)=\psi F(\rho),\,\, \rho=|\xi|_\HH, 
$$
then {\bf (see \cite{DA})}
\begin{equation}\label{2pt1}
\int_{B_{\HH}(0,R_2)\backslash\overline{B_{\HH}(0,R_1)}}f(\xi)\,d\xi=C_N\int_{R_1}^{R_2}\rho^{2N+1}F(\rho)\,d\rho,
\end{equation}
where  
$$
{C_N:=\omega_N \int_0^\pi(\sin \theta)^N\,d\theta}
$$
and  ${\omega_N}$ is  {$2N$-surface measure of the unitary Euclidean sphere in $\RR^{2N}$.}

For all $i=1,\cdots,N$, let us consider the vector  fields
$$
X_i:=\frac{\partial }{\partial x_i}+2y_i \frac{\partial }{\partial \phi},\quad Y_i:=\frac{\partial }{\partial y_i}-2x_i \frac{\partial }{\partial \phi}.
$$
The {horizontal} gradient is defined by 
$$
\nabla_\HH:=\left(X_1,\cdots,X_N,Y_1,\cdots,Y_N\right).
$$
The Kohn-Laplacian operator is defined by 
$$
\Delta_\HH:=\sum_{i=1}^N (X_i^2+Y_i^2).
$$
Let ${f\in C^2(\Omega)}$, where ${\Omega:=B_{\HH}(0,R_2)\backslash\overline{B_{\HH}(0,R_1)}}$ and ${0\leq R_1<R_2\leq \infty}$.    If $f$ is radial, that is, $f(\xi)=F(\rho)$ ($\rho=|\xi|_\HH$), then 
$$
|\nabla_Hf(\xi)|^2=\psi|F'(\rho)|^2
$$
and
\begin{equation}\label{lap}
\Delta_\HH f(\xi)= \psi\left(F''(\rho)+\frac{2N+1}{\rho}F'(\rho)\right).
\end{equation}
In particular, we have 
$$
|\nabla_\HH \rho|^2=\psi.
$$

For $z=(x,y)\in \RR^{2N}$, let $A(z)$ be the matrix in $\RR^{2N+1}$ given by 
$$
A(z):=\left(\begin{array}{ccc}
I_N & 0_N &2y^*\\
0_N & I_N & -2x^*\\
2y &-2x & 4\displaystyle\sum_{i=1}^N\left(x_i^2+y_i^2\right)	
\end{array}
\right),
$$  
where $x=(x_1,\cdots,x_N)$, $y=(y_1,\cdots,y_N)$, $x^*$ (resp. $y^*$) is the transpose of $x$ (resp. of $y$), $I_N$ is the identity matrix in $\RR^N$ and $0_N$ is the zero matrix in $\RR^N$.  The Kohn-Laplacian operator can be written in the form
\begin{equation}\label{KLAP}
\Delta_\HH=\diver_{\RR^{2N+1}}\left(A(z)\nabla_{\RR^{2N+1}}\right),	
\end{equation}
where $\diver_{\RR^{2N+1}}$ (resp. $\nabla_{\RR^{2N+1}}$) is the divergence operator (resp. gradient operator) in $\RR^{2N+1}$.

\section{Main results}\label{sec3}

Let us first define weak solutions to \eqref{P}--\eqref{BC}. Let
$$
Q:=(0,\infty)\times \overline{B_\HH}\backslash\{0\}\quad\mbox{and}\quad  \Gamma:=(0,\infty)\times \partial B_\HH.
$$
\begin{definition}\label{def3.1}
We say that $\varphi=\varphi(t,\xi)$ is an admissible test function, if 	
\begin{itemize}
\item[{\rm{(A$_1$)}}] $\varphi\in C_{t,\xi}^{k,2}(Q)$;	
\item[{\rm{(A$_2$)}}] $\varphi\geq 0$;
\item[{\rm{(A$_3$)}}] $\supp(\varphi)\subset\subset Q$;
\item[{\rm{(A$_4$)}}] $\varphi|_{\Gamma}=0$;
\item[{\rm{(A$_5$)}}] $A\nabla_{\RR^{2N+1}}\varphi\cdot n\leq 0$ on ${{\Gamma}}$,
where  $n$ is the exterior unit normal on $\partial B_\HH$, that is, 
$${{
n(\xi):=\frac{\nabla_{\RR^{2N+1}} |\xi|_{\HH}}{\left|\nabla_{\RR^{2N+1}} |\xi|_{\HH}\right|},\quad \xi\in \partial B_\HH}}
$$
and $\cdot$ is the inner product in $\RR^{2N+1}$. 
\end{itemize}
The set of all admissible test functions is denoted by $\Phi$. 
\end{definition}

Weak solutions to \eqref{P}--\eqref{BC} are defined as follows.

\begin{definition}\label{def-ws}
Let $k\geq 1$ be an integer,  $p>1$,  $\lambda\geq -\left(\frac{Q-2}{2}\right)^2$,  $V=V(\xi)>0$ a.e. in $B_\HH$ and $f=f(\xi)\in L^1(\partial B_\HH)$. We say that $u$ is a weak solution to  \eqref{P}--\eqref{BC}, if $u\in L^p_{\loc}(Q,V\psi \,d\xi\,dt)\cap L^1_{\loc}(Q,\psi\,d\xi\,dt)$ and 
\begin{equation}\label{ws}
\begin{aligned}
&\int_Q |u|^p\varphi V\psi\,d\xi\,dt-\int_{\Gamma} f A\nabla_{\RR^{2N+1}}\varphi\cdot n\,{{dH_{2N}}} \,dt\\
&\leq \int_Q u\left((-1)^k\frac{\partial^k\varphi}{\partial t^k}-\frac{1}{\psi}\Delta_\HH\varphi+\frac{\lambda}{|\xi|_\HH^2}\varphi\right)\psi\,d\xi\,dt
\end{aligned}
\end{equation}
for every $\varphi\in \Phi$, where ${{dH_{2N}}}$ {is the $2N$-dimensional surface measure in $\RR^{2N+1}$.} 
\end{definition}

Notice that, if $u\in  C_{t,\xi}^{k,2}(Q)$ is a (classical) solution to \eqref{P}--\eqref{BC}, then $u$ is a weak solution in the sense of Definition \ref{def-ws}. Namely, multiplying \eqref{P} by $\psi\varphi$, where $\varphi\in \Phi$, integrating by parts over $Q$, using \eqref{KLAP},  {the Euclidean version of the divergence theorem} and the boundary condition \eqref{BC}, we obtain \eqref{ws}.

For $\lambda\geq -\left(\frac{Q-2}{2}\right)^2$, we introduce the parameters
$$
\alpha^\pm:=-\frac{Q-2}{2}\pm \sqrt{\lambda+\left(\frac{Q-2}{2}\right)^2}.	
$$
Let $\sigma_\lambda$ be the function defined in $(0,\infty)$ by 
$$
\sigma_\lambda(s):=\left\{\begin{array}{llll}
s^{\alpha^-}-s^{\alpha^+}  &\mbox{if}& \lambda>-\left(\frac{Q-2}{2}\right)^2,\\[2pt]
-s^{\alpha^-} \ln s &\mbox{if}& \lambda=-\left(\frac{Q-2}{2}\right)^2.	
\end{array}
\right.
$$
We also introduce the radial function $K$ defined  by 
\begin{equation}\label{K-f}
K(\xi):=\sigma_\lambda(|\xi|_\HH),\quad \xi\in B_\HH\backslash\{0\}.	
\end{equation} 
Let $L^{1,+}(\partial B_\HH)$ be the set of functions defined by
$$
L^{1,+}(\partial B_\HH):=\left\{h\in L^1(\partial B_\HH): \int_{\partial B_\HH}h(\xi) \frac{\psi}{\left|\nabla_{\RR^{2N+1}}|\xi|_\HH\right|}\,{{dH_{2N}}}>0\right\}.
$$

Our first main result is stated in the following theorem.

\begin{theorem}\label{T1}
Let $k\geq 1$ be an integer,  $p>1$,  $\lambda\geq -\left(\frac{Q-2}{2}\right)^2$,  $V=V(\xi)>0$ a.e. in $B_\HH$ and $V^{\frac{-1}{p-1}}\in L^1_{\loc}(B_\HH\backslash\{0\},K\psi\,d\xi)$.  Assume that 
$f\in L^{1,+}(\partial B_\HH)$. If 
\begin{equation}\label{cd1-bl}
\liminf_{R\to \infty}R^{\frac{2p}{p-1}} \int_{B_\HH\left(0,\frac{1}{R}\right)\backslash\overline{B_\HH\left(0,\frac{1}{2R}\right)}} 	V^{\frac{-1}{p-1}}K\psi\,d\xi=0,
\end{equation}
then \eqref{P}--\eqref{BC} admits no weak solution. 	
\end{theorem}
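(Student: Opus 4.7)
Argue by contradiction using the nonlinear capacity method: assume $u$ is a weak solution, construct a two-parameter family of admissible test functions $\varphi_{T,R}$ built from the ``ground state'' $K$ together with cutoffs in time and near the origin, and drive the boundary contribution of $f$ to zero in the limit, contradicting $f\in L^{1,+}(\partial B_\HH)$.

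\emph{Choice of test function and admissibility.} Using \eqref{lap}, a direct computation shows that any radial $\rho^{\alpha^\pm}$ (and $\rho^{\alpha^-}\ln\rho$ in the critical case) lies in the kernel of $-\frac{1}{\psi}\Delta_\HH+\lambda|\xi|_\HH^{-2}$ on $B_\HH\setminus\{0\}$; hence $K$ is annihilated by this operator, $K\geq 0$ on $B_\HH\setminus\{0\}$, $K|_{\partial B_\HH}=0$, and $\sigma_\lambda'(1)<0$. For $R\gg 1$, fix a smooth non-decreasing $\zeta_R\colon[0,\infty)\to[0,1]$ with $\zeta_R\equiv 0$ on $[0,1/(2R)]$, $\zeta_R\equiv 1$ on $[1/R,\infty)$, $|\zeta_R^{(j)}|\leq CR^{j}$ ($j=1,2$), and let $g_T(t)=\eta(t/T)^\beta$ with $\eta\in C_c^\infty((0,1))$ non-negative. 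For $\ell\geq 2p/(p-1)$ and $\beta\geq kp/(p-1)$ (both chosen sufficiently large), define
\[
\varphi_{T,R}(t,\xi):=g_T(t)\,K(\xi)\,\zeta_R(|\xi|_\HH)^\ell.
\]
Conditions (A$_1$)--(A$_4$) are immediate, with $\varphi_{T,R}|_\Gamma=0$ since $K|_{\partial B_\HH}=0$. For (A$_5$), $\zeta_R\equiv 1$ in a neighborhood of $\partial B_\HH$, so on $\Gamma$ one has $\nabla_{\RR^{2N+1}}\varphi_{T,R}=g_T\,\sigma_\lambda'(1)\,\nabla_{\RR^{2N+1}}|\xi|_\HH$; the sub-Riemannian identity $\nabla F\cdot A\nabla F=|\nabla_\HH F|^2$ applied to $F=|\xi|_\HH$ gives $A\nabla|\xi|_\HH\cdot\nabla|\xi|_\HH=\psi$, whence
\[
A\nabla_{\RR^{2N+1}}\varphi_{T,R}\cdot n = g_T\,\sigma_\lambda'(1)\,\frac{\psi}{|\nabla_{\RR^{2N+1}}|\xi|_\HH|}\leq 0\quad\text{on}\ \Gamma,
\]
so $\varphi_{T,R}\in\Phi$, and the same identity gives
\[
-\!\int_\Gamma f\,A\nabla\varphi_{T,R}\cdot n\,dH_{2N}\,dt = c_f\|g_T\|_{L^1(0,T)},\quad c_f:=|\sigma_\lambda'(1)|\!\int_{\partial B_\HH}\!f\,\frac{\psi}{|\nabla_{\RR^{2N+1}}|\xi|_\HH|}\,dH_{2N}>0.
\]

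\emph{Capacity estimate and passage to the limit.} Inserting $\varphi_{T,R}$ into \eqref{ws} and applying $\varepsilon$-Young on the right-hand side to absorb $\int_Q |u|^p V\varphi_{T,R}\psi$ into the left yields
\[
c_f\|g_T\|_{L^1}\leq C\!\int_Q V^{-\frac{1}{p-1}}\,\varphi_{T,R}^{-\frac{1}{p-1}}\,|\mathcal{L}\varphi_{T,R}|^{\frac{p}{p-1}}\,\psi\,d\xi\,dt,
\]
where $\mathcal{L}\varphi:=(-1)^k\partial_t^k\varphi-\frac{1}{\psi}\Delta_\HH\varphi+\lambda|\xi|_\HH^{-2}\varphi$. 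Because $K$ lies in the kernel of the elliptic part, the spatial contribution reduces to $-K(\zeta_R^\ell)''-\frac{2N+1}{\rho}K(\zeta_R^\ell)'-2K'(\zeta_R^\ell)'$ (written as a radial function of $\rho=|\xi|_\HH$), is supported in $A_R:=\{1/(2R)\leq|\xi|_\HH\leq 1/R\}$, and is bounded there by $CR^2 K\zeta_R^{\ell-2}$ using $|K'/K|\lesssim 1/\rho\sim R$ on $A_R$. The conditions $\ell\geq 2p/(p-1)$, $\beta\geq kp/(p-1)$ make the Young integrand locally bounded, and separating time and space contributions gives
\[
c_f\|g_T\|_{L^1}\leq CR^{\frac{2p}{p-1}}\|g_T\|_{L^1}\!\!\int_{A_R}\!V^{-\frac{1}{p-1}}K\psi\,d\xi + CT^{1-\frac{kp}{p-1}}\!\!\int_{B_\HH\setminus B_\HH(0,\frac{1}{2R})}\!V^{-\frac{1}{p-1}}K\psi\,d\xi.
\]
Dividing by $\|g_T\|_{L^1}\sim T$, fixing $R\gg 1$ and sending $T\to\infty$ kills the second term (since $kp/(p-1)>0$); then letting $R\to\infty$ along a sequence realizing the liminf in \eqref{cd1-bl} kills the first. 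We conclude $c_f\leq 0$, contradicting $c_f>0$.

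\emph{Main obstacle.} The delicate point is engineering $\varphi_{T,R}$ so that (A$_5$) is preserved at $\partial B_\HH$ while the kernel property of $K$ localizes $\mathcal{L}\varphi_{T,R}$ to $A_R$, and simultaneously the powers $\ell,\beta$ balance the Young-inequality denominator $\varphi_{T,R}^{-1/(p-1)}$ against the derivatives of $\zeta_R^\ell$ and $g_T$ to produce precisely the scaling $R^{2p/(p-1)}$ of the hypothesis. Verification of the structural identity $A\nabla|\xi|_\HH\cdot\nabla|\xi|_\HH=\psi$ and the sign $\sigma_\lambda'(1)<0$ is what makes the $f$-boundary term deliver the positive lower bound driving the contradiction.
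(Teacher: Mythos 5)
Your proposal is correct and follows essentially the same route as the paper: the same nonlinear capacity argument with test functions $\beta_T(t)\,K(\xi)\,\zeta^\iota(R|\xi|_\HH)$, the same boundary identity $A\nabla_{\RR^{2N+1}}K\cdot n=\sigma_\lambda'(1)\,\psi/|\nabla_{\RR^{2N+1}}|\xi|_\HH|<0$ producing the positive term $c_f$, and the same Young-inequality absorption leading to the bound $c_f\leq C\bigl(T^{-\frac{kp}{p-1}}\eta(R)+R^{\frac{2p}{p-1}}\int_{A_R}V^{-\frac{1}{p-1}}K\psi\,d\xi\bigr)$. The only deviation is at the end: you send $T\to\infty$ at fixed $R$ (using that $\eta(R)<\infty$ for each fixed $R$, which the paper also needs) and then let $R\to\infty$ along the liminf sequence, which is a valid and slightly cleaner alternative to the paper's coupled choices $T=R$ or $T=[\eta(R)]^\theta$ with its two-case discussion of whether $\eta(R)$ stays bounded.
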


We now consider the case when
\begin{equation}\label{V-f}
V(\xi)=|\xi|_{\mathbb{H}}^a,\quad \xi\in B_\HH\backslash\{0\}, 	
\end{equation}
where  $a\in \RR$ is a  constant, and $V^{\frac{-1}{p-1}}\in L^1_{\loc}(B_\HH\backslash\{0\},K\psi\,d\xi)$ for all $a\in \RR$. In this case, we have the following result.

\begin{theorem}\label{T2}
Let $k\geq 1$ be an integer,  $p>1$, $\lambda\geq -\left(\frac{Q-2}{2}\right)^2$, and $V$ be the function  given by \eqref{V-f}.
\begin{itemize}
\item[{\rm{(I)}}] 	If $f\in L^{1,+}(\partial B_\HH)$ and 
\begin{equation}\label{cd-bl-V}
\left\{\begin{array}{llll}
(Q-2+\alpha^-)p\geq Q +a +\alpha^- &\mbox{if}& \lambda> -\left(\frac{Q-2}{2}\right)^2,\\
(Q-2+\alpha^-)p> Q +a +\alpha^-	 &\mbox{if}& \lambda= -\left(\frac{Q-2}{2}\right)^2,
\end{array}
\right.	
\end{equation}
then \eqref{P}--\eqref{BC} admits no weak solution.
\item[{\rm{(II)}}] If 
\begin{equation}\label{cd-ex-V}
(Q-2+\alpha^-)p< Q +a +\alpha^-,	
\end{equation}
then \eqref{P}--\eqref{BC} admits stationary solutions $u\in C^\infty(B_\HH\backslash\{0\})$ for some  $f\in L^{1,+}(\partial B_\HH)$.
\end{itemize}
\end{theorem}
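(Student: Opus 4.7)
The plan splits into the two parts of the theorem, each controlled by an explicit computation.

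Part (I). I will verify the integral condition \eqref{cd1-bl} of Theorem~\ref{T1} with $V(\xi)=|\xi|_{\HH}^{a}$ and $K(\xi)=\sigma_\lambda(|\xi|_{\HH})$. The polar-coordinate identity \eqref{2pt1} reduces the integral in \eqref{cd1-bl} to
\[
C_N\int_{1/(2R)}^{1/R}\rho^{Q-1-a/(p-1)}\sigma_\lambda(\rho)\,d\rho,
\]
and the substitution $\rho=s/R$ factors out the $R$-dependence cleanly. Expanding $\sigma_\lambda$ (as $\rho^{\alpha^-}-\rho^{\alpha^+}$, or as $-\rho^{\alpha^-}\ln\rho$ when $\lambda=-((Q-2)/2)^{2}$) produces at most two summands of the form $R^{\beta^{\pm}}I^{\pm}$, where
\[
\beta^{\pm}:=\frac{2p+a}{p-1}-Q-\alpha^{\pm},
\]
and the $I^{\pm}$ are positive constants (with an additional $\log R$-factor in the degenerate case). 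Since $\alpha^{-}\le\alpha^{+}$ the exponent $\beta^{-}$ dominates, and the elementary equivalence $\beta^{-}<0\Longleftrightarrow (Q-2+\alpha^-)p>Q+a+\alpha^-$ (using $-\alpha^{+}=Q-2+\alpha^{-}$) shows that the strict case of \eqref{cd-bl-V} entails \eqref{cd1-bl}; Theorem~\ref{T1} then closes the strict sub-case. The critical equality sub-case $\beta^{-}=0$, relevant only when $\lambda>-((Q-2)/2)^{2}$, lies outside the scope of \eqref{cd1-bl} (the liminf equals the positive constant $C_NI^{-}$), so it forces me to re-enter the argument underlying Theorem~\ref{T1}: I would use the same rescaled test functions built in Section~\ref{sec4}, apply Young's inequality without slack, and exploit the resulting uniform bound on $\int_{Q}|u|^{p}\varphi_{T,R}V\psi\,d\xi\,dt$ through a Fatou-type passage to the limit so as to force $\int_{Q}|u|^{p}V\psi=0$, a conclusion incompatible with the strictly positive boundary integral produced by $f\in L^{1,+}(\partial B_\HH)$.

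Part (II). I will construct explicit radial, time-independent solutions. In the non-degenerate case $\lambda>-((Q-2)/2)^{2}$, I try $u(\xi)=A|\xi|_{\HH}^{\gamma}$. By \eqref{lap}, the stationary version of \eqref{P} reduces to
\[
A(\alpha^{+}-\gamma)(\gamma-\alpha^{-})\rho^{\gamma-2}\ge A^{p}\rho^{\gamma p+a},\qquad\rho\in(0,1].
\]
Positivity of the prefactor is equivalent to $\gamma\in(\alpha^{-},\alpha^{+})$, and comparing the $\rho$-exponents forces $\gamma\ge\gamma_{0}:=-(a+2)/(p-1)$; for such $\gamma$ the leftover factor $\rho^{\gamma(1-p)-a-2}$ is $\ge 1$ on $(0,1]$. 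Condition \eqref{cd-ex-V} is equivalent to $\alpha^{+}>\gamma_{0}$, so the interval $\bigl(\max(\alpha^{-},\gamma_{0}),\alpha^{+}\bigr)$ is nonempty; picking any $\gamma$ there and $A>0$ small yields the desired classical supersolution, with boundary trace $f\equiv A$ clearly in $L^{1,+}(\partial B_\HH)$. In the degenerate case $\lambda=-((Q-2)/2)^{2}$, the quadratic factor vanishes and I switch to
\[
u(\xi)=A\,|\xi|_{\HH}^{-(Q-2)/2}\bigl(\ln(c/|\xi|_{\HH})\bigr)^{\delta},\qquad c>1,\ \delta\in(0,1).
\]
A direct computation reduces \eqref{P} to $-\delta(\delta-1)\ge A^{p-1}\rho^{\mu}h^{\nu}$ with $h(\rho)=\ln(c/\rho)$, $\mu=-(Q-2)(p-1)/2+a+2$, and $\nu=\delta(p-1)+2$; under \eqref{cd-ex-V} one has $\mu>0$, while $\nu>0$ and $h\ge\ln c>0$ on $(0,1]$ make the right-hand side bounded. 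Fixing $A$ small completes the verification, and the boundary trace $f\equiv A(\ln c)^{\delta}$ is again in $L^{1,+}(\partial B_\HH)$.

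The principal difficulty is the critical equality sub-case of Part (I): Theorem~\ref{T1} no longer applies off the shelf, and the threshold-exponent argument must be redone with sharp bookkeeping of the Young-inequality exponents and a Fatou-type limit producing the forced vanishing of $u$ in the bulk, very much in the spirit of the classical Baras--Pierre / Mitidieri--Pohozaev threshold analysis.
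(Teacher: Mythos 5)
Your Part (II) construction and the strict sub-case of Part (I) are correct and essentially identical to the paper's own argument: the paper also verifies \eqref{cd1-bl} by bounding $K(\xi)\le C|\xi|_\HH^{\alpha^-}\ln R$ on the annulus and invoking Theorem \ref{T1}, and in Part (II) it takes exactly your radial ansatz (written as $u=\varepsilon\rho^{-\tau}$ with $\tau=-\gamma\in(Q-2+\alpha^-,\,Q-2+\alpha^+)$, $\tau<\frac{a+2}{p-1}$, $\varepsilon$ small) and, in the degenerate case, $u=\varepsilon\rho^{\frac{2-Q}{2}}(1-\ln\rho)^{\beta}$, which is your function with $c=e$; your bookkeeping of exponents there checks out.

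The genuine gap is the critical equality sub-case of Part (I), i.e.\ $\lambda>-\left(\frac{Q-2}{2}\right)^2$ and $(Q-2+\alpha^-)p=Q+a+\alpha^-$. You correctly observe that \eqref{cd1-bl} fails there (the liminf is a positive constant), but the mechanism you then describe --- ``Young's inequality without slack, the resulting uniform bound on $\int_Q|u|^p\varphi V\psi$, and a Fatou-type passage forcing $\int_Q|u|^pV\psi=0$'' --- is not a proof: a uniform bound obtained from the capacity estimate does not force the integral to vanish, and no step in your sketch produces the required decay. To close this case one needs an extra quantitative ingredient. The paper's device is a \emph{second} family of test functions, $\varphi=\beta_T(t)\mu_R(\xi)$ with the logarithmic cutoff $\mu_R(\xi)=K(\xi)\,\ell^{\iota}\!\left(1+\frac{\ln|\xi|_\HH}{\ln R}\right)$ of \eqref{muRR} (Lemmas \ref{L4.3} and \ref{L4.11}); at criticality the resulting bound carries the factor $(\ln R)^{-\frac{p}{p-1}}$ while the annulus integral over $\frac{1}{R}<|\xi|_\HH<\frac{1}{\sqrt R}$ is only $C\ln R$, so $J_2$ contributes $(\ln R)^{-\frac{1}{p-1}}\to 0$ and the contradiction with $f\in L^{1,+}(\partial B_\HH)$ follows as in Theorem \ref{T1}. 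Alternatively, a Mitidieri--Pohozaev-type refinement with your original test functions can be made to work, but it requires more than you state: first Young \emph{with} slack (retaining a fraction of the $|u|^p$ term) to get, at fixed $T$ and via monotone convergence in $R$, integrability of $|u|^pKV\psi$ up to the origin (here one uses that at criticality $\int_{B_\HH}V^{\frac{-1}{p-1}}K\psi\,d\xi<\infty$); then a second pass using H\"older localized to the shrinking annulus together with absolute continuity of the integral to kill the spatial term, and finally the linear-in-$T$ growth of the boundary term \eqref{S2-T1} against the $T^{1-k}$ bound of the time term to reach the contradiction. Neither of these steps is present in your sketch, so as written the critical case does not go through.
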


\begin{remark}\label{RK1}
Observe that the existence and nonexistence results provided by Theorems \ref{T1} and \ref{T2}  are independent of the order of the time-derivative $k$. So the obtained results hold true for both parabolic ($k=1$) and 	hyperbolic ($k=2$) cases. 
\end{remark}

Clearly, Theorem \ref{T2} yields existence and nonexistence results for the stationary problem
\begin{equation}\label{PS}
\frac{1}{\psi}\Delta_\HH u+\frac{\lambda}{|\xi|_\HH^2}u\geq |\xi|_\HH^a |u|^p\quad \mbox{in } B_\HH	
\end{equation}
under the boundary condition \eqref{BC}. Namely, we have the following result.

\begin{corollary}
Let $p>1$, $\lambda\geq -\left(\frac{Q-2}{2}\right)^2$ and $a\in \RR$.
\begin{itemize}
\item[{\rm{(I)}}] 	If $f\in L^{1,+}(\partial B_\HH)$ and \eqref{cd-bl-V} holds,
then \eqref{PS} under the boundary condition \eqref{BC} admits no weak solution.
\item[{\rm{(II)}}] If \eqref{cd-ex-V} holds,  then  \eqref{PS} under the boundary condition \eqref{BC}  admits solutions $u\in C^\infty(B_\HH\backslash\{0\})$ for some  $f\in L^{1,+}(\partial B_\HH)$.
\end{itemize}
\end{corollary}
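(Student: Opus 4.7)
The plan is to deduce both statements directly from Theorem \ref{T2} applied with $V(\xi) = |\xi|_\HH^a$, by using stationary (time-independent) functions as a bridge between \eqref{PS}--\eqref{BC} and \eqref{P}--\eqref{BC}.

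First I would establish the following correspondence: a measurable function $u$ on $B_\HH \setminus \{0\}$ is a weak solution of the stationary problem \eqref{PS}--\eqref{BC} if and only if its time-independent extension $\tilde u(t,\xi) := u(\xi)$ is a weak solution of \eqref{P}--\eqref{BC} in the sense of Definition \ref{def-ws}. The key observation is that for every admissible test function $\varphi \in \Phi$, property (A$_3$) guarantees that $\varphi$ and all its $t$-derivatives up to order $k-1$ vanish outside a compact subset of $(0,\infty)$; a repeated integration by parts in $t$ therefore makes the contribution $\int_0^\infty u\,(-1)^k \partial_t^k \varphi\,dt$ collapse to zero when $u$ is independent of $t$, so that \eqref{ws} reduces (after a trivial integration in $t$) to the stationary weak formulation of \eqref{PS}--\eqref{BC}. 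Checking that the integrability conditions on $u$ required by Definition \ref{def-ws} transfer correctly to $\tilde u$ is routine, since $\psi\,d\xi \otimes dt$ restricted to any compact subset of $Q$ factors into $\psi\,d\xi$ times a bounded $t$-interval.

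With this correspondence in hand, part (I) follows by contraposition: a weak solution of \eqref{PS}--\eqref{BC} would produce, via $\tilde u$, a weak solution of \eqref{P}--\eqref{BC} with $V(\xi) = |\xi|_\HH^a$, which is ruled out by Theorem \ref{T2}(I) under hypothesis \eqref{cd-bl-V} with $f \in L^{1,+}(\partial B_\HH)$. For part (II), Theorem \ref{T2}(II) delivers, under \eqref{cd-ex-V}, stationary solutions $u \in C^\infty(B_\HH \setminus \{0\})$ of \eqref{P}--\eqref{BC} for some $f \in L^{1,+}(\partial B_\HH)$; restricting back to the spatial variable, these solve the stationary inequality with the stated regularity and boundary data.

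Because the corollary is essentially a direct translation of Theorem \ref{T2}, no substantial obstacle is expected. The only delicate point will be the careful handling of the admissible test-function class: one must verify that $\Phi$ is rich enough to capture the usual weak formulation of the stationary problem and that the boundary contribution produced by (A$_5$) matches exactly what is used when testing against stationary test functions. All of this is already built into Definition \ref{def-ws} and should require no new ideas beyond the time-reduction above.
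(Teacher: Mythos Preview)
Your proposal is correct and matches the paper's intent: the paper gives no proof whatsoever for this corollary, simply introducing it with ``Clearly, Theorem \ref{T2} yields existence and nonexistence results for the stationary problem,'' so you are just making explicit the obvious reduction that the authors leave to the reader. One minor remark: for the vanishing of the time-derivative term you do not need integration by parts---since $\supp(\varphi)\subset\subset Q$, the fundamental theorem of calculus gives $\int_0^\infty \partial_t^k\varphi\,dt=0$ directly.
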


Let us study some special cases of Theorem \ref{T2}. Just before, we introduce below three kinds of critical exponents for \eqref{P}--\eqref{BC}, where $V$ is defined by \eqref{V-f}. Namely, we shall deduce from Theorem \ref{T2} that \eqref{P}--\eqref{BC} admits three different critical behaviors according to the value of  the parameter $\lambda$.

\begin{definition}[Critical exponent of the first kind]\label{def-cr}
Let $p_{cr}>1$. We say that $p_{cr}$ is a critical exponent of the first kind for  \eqref{P}--\eqref{BC}, where $V$ is defined by \eqref{V-f}, if
\begin{itemize}
\item[{\rm{(i)}}] for all $1<p< p_{cr}$,	the problem \eqref{P}--\eqref{BC} admits no weak solution,  provided $f\in L^{1,+}(\partial B_\HH)$;
\item[{\rm{(ii)}}] for all $p>p_{cr}$, the problem \eqref{P}--\eqref{BC} admits weak solutions for some $f\in L^{1,+}(\partial B_\HH)$.
\end{itemize}
\end{definition}

\begin{definition}[Critical exponent of the second kind]\label{def2-cr}
Let $p_{cr}>1$. We say that $p_{cr}$ is a critical exponent of the second kind for  \eqref{P}--\eqref{BC}, where $V$ is defined by \eqref{V-f}, if
\begin{itemize}
\item[{\rm{(i)}}] for all $p> p_{cr}$, the problem \eqref{P}--\eqref{BC} admits no weak solution, provided $f\in L^{1,+}(\partial B_\HH)$;
\item[{\rm{(ii)}}] for all $1<p<p_{cr}$, the problem \eqref{P}--\eqref{BC} admits weak solutions for some $f\in L^{1,+}(\partial B_\HH)$.
\end{itemize}
\end{definition}

\begin{definition}[Critical exponent of the third kind]\label{def2-cr}
We say that $a^*\in \RR$ is a critical exponent of the third kind for  \eqref{P}--\eqref{BC}, where $V$ is defined by \eqref{V-f}, if 	
\begin{itemize}
\item[{\rm{(i)}}] for all $p>1$, the problem \eqref{P}--\eqref{BC} admits no weak solution, provided $f\in L^{1,+}(\partial B_\HH)$ and $a\leq a^*$;
\item[{\rm{(ii)}}] for all $p>1$, the problem \eqref{P}--\eqref{BC} admits weak solutions for some $f\in L^{1,+}(\partial B_\HH)$, provided $a>a^*$.
\end{itemize}
\end{definition}

Assume first that
$$
\lambda=-\left(\frac{Q-2}{2}\right)^2.
$$
In this case, one has
$$
Q-2+\alpha^-=\frac{Q-2}{2}>0$$ and $$Q+a+\alpha^-=\frac{Q+2}{2}+a.
$$
Hence, from Theorem \ref{T2}, we deduce the following result.

\begin{corollary}\label{CR3.9}
Let $k\geq 1$ be an integer,	 $\lambda=-\left(\frac{Q-2}{2}\right)^2$ and $V$ be the function  given by \eqref{V-f}.
\begin{itemize}
\item[{\rm{(I)}}] Let $f\in L^{1,+}(\partial B_\HH)$.
\begin{itemize}
\item[{\rm{(i)}}] If $a\leq -2$, then for all $p>1$, the problem \eqref{P}--\eqref{BC} admits no weak solution.
\item[{\rm{(ii)}}] If $a>-2$, then for all
$$
p> 1+\frac{2(a+2)}{Q-2},
$$
the problem \eqref{P}--\eqref{BC} admits no weak solution.
\end{itemize}
\item[{\rm{(II)}}] If $a>-2$ and
$$
1<p<1+\frac{2(a+2)}{Q-2},
$$
then the problem \eqref{P}--\eqref{BC} admits stationary solutions $u\in C^\infty(B_\HH\backslash\{0\})$ for some  $f\in L^{1,+}(\partial B_\HH)$.
\end{itemize}
\end{corollary}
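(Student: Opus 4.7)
The plan is to deduce Corollary \ref{CR3.9} directly from Theorem \ref{T2} by specializing the parameters $\alpha^{\pm}$ to the borderline case $\lambda=-\left(\frac{Q-2}{2}\right)^2$ and then re-expressing the abstract conditions \eqref{cd-bl-V} and \eqref{cd-ex-V} in explicit form in terms of $a$ and $p$.

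First, I would note that when $\lambda=-\left(\frac{Q-2}{2}\right)^2$, the discriminant $\lambda+\left(\frac{Q-2}{2}\right)^2$ vanishes, so
\[
\alpha^+=\alpha^-=-\frac{Q-2}{2}.
\]
Substituting this into the two quantities appearing in \eqref{cd-bl-V}--\eqref{cd-ex-V} gives
\[
Q-2+\alpha^-=\frac{Q-2}{2}>0,\qquad Q+a+\alpha^-=\frac{Q+2}{2}+a.
\]
Since we are in the equality case of $\lambda$, the relevant inequality in \eqref{cd-bl-V} is the strict one, so the nonexistence hypothesis of Theorem \ref{T2}(I) becomes
\[
\frac{Q-2}{2}\,p > \frac{Q+2}{2}+a,
\]
which, upon dividing by $(Q-2)/2>0$, reduces to
\[
p> 1+\frac{2(a+2)}{Q-2}.
\]

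Second, I would split the discussion according to the sign of $a+2$. If $a\leq -2$, then the right-hand side above is at most $1$, so every $p>1$ satisfies the strict inequality, and part (I)(i) follows from Theorem \ref{T2}(I). If $a>-2$, the right-hand side exceeds $1$, and the inequality is exactly the bound stated in part (I)(ii). For part (II), the existence condition \eqref{cd-ex-V} of Theorem \ref{T2}(II) reads $\frac{Q-2}{2}\,p<\frac{Q+2}{2}+a$, which rearranges to
\[
p<1+\frac{2(a+2)}{Q-2},
\]
yielding directly the existence of a stationary solution $u\in C^\infty(B_\HH\backslash\{0\})$ for a suitable boundary datum $f\in L^{1,+}(\partial B_\HH)$.

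There is no real obstacle here: the corollary is a pure arithmetic specialization of Theorem \ref{T2} to the critical value of $\lambda$, and the only point requiring a little care is to remember that in the borderline case one must use the \emph{strict} form of the inequality in \eqref{cd-bl-V} (which is why the case $p = 1+\frac{2(a+2)}{Q-2}$ is not covered by part (I)(ii)), and to verify that the dimensional factor $\frac{Q-2}{2}$ is positive so that dividing by it preserves the inequality.
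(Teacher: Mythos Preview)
Your proposal is correct and is precisely the argument the paper gives: the paper simply records that in this case $Q-2+\alpha^-=\frac{Q-2}{2}>0$ and $Q+a+\alpha^-=\frac{Q+2}{2}+a$, and then states that Corollary~\ref{CR3.9} follows from Theorem~\ref{T2}. Your write-up in fact supplies a bit more detail than the paper, including the case split on the sign of $a+2$ and the observation that the strict inequality in \eqref{cd-bl-V} is the relevant one here.
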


\begin{remark}
At this moment, in the case $$\lambda=-\left(\frac{Q-2}{2}\right)^2$$ and $$p= 1+\frac{2(a+2)}{Q-2},$$ we do not know whether we have existence or nonexistence of weak solutions to  \eqref{P}--\eqref{BC}, where 	$V$ is the function  given by \eqref{V-f}. This question is left  open.
\end{remark}

Assume now that
$$
-\left(\frac{Q-2}{2}\right)^2<\lambda<0.
$$
In this case, one has
$$
Q-2+\alpha^->0.
$$
Hence, from Theorem \ref{T2}, we deduce the following result.

\begin{corollary}\label{CR3.10}
Let $k\geq 1$ be an integer,	 $-\left(\frac{Q-2}{2}\right)^2<\lambda<0$ and $V$ be the function  given by \eqref{V-f}.	\begin{itemize}
\item[{\rm{(I)}}] Let $f\in L^{1,+}(\partial B_\HH)$.
\begin{itemize}
\item[{\rm{(i)}}] If $a\leq -2$, then for all $p>1$, the problem \eqref{P}--\eqref{BC} admits no weak solution.
\item[{\rm{(ii)}}] If $a>-2$, then for all
$$
p\geq  1+\frac{a+2}{Q-2+\alpha^-},
$$
the problem \eqref{P}--\eqref{BC} admits no weak solution.
\end{itemize}
\item[{\rm{(II)}}] If $a>-2$ and
$$
1<p<1+\frac{a+2}{Q-2+\alpha^-},
$$
then the problem \eqref{P}--\eqref{BC} admits stationary solutions $u\in C^\infty(B_\HH\backslash\{0\})$ for some  $f\in L^{1,+}(\partial B_\HH)$.
\end{itemize}
\end{corollary}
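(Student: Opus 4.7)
The plan is to derive Corollary~\ref{CR3.10} as a direct specialization of Theorem~\ref{T2} to the regime $-\left(\frac{Q-2}{2}\right)^2<\lambda<0$, so the work reduces to algebraic manipulation of conditions \eqref{cd-bl-V} and \eqref{cd-ex-V}. The only conceptual input is a sign analysis of the quantities $\alpha^-$ and $Q-2+\alpha^-$ in this $\lambda$-range, which governs how the inequalities may be rearranged.

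First I would record the key sign facts. Because $0<\lambda+\left(\frac{Q-2}{2}\right)^2<\left(\frac{Q-2}{2}\right)^2$, the square root $\sqrt{\lambda+\left(\frac{Q-2}{2}\right)^2}$ is strictly smaller than $\frac{Q-2}{2}$. Hence
$$
\alpha^-=-\frac{Q-2}{2}-\sqrt{\lambda+\left(\tfrac{Q-2}{2}\right)^2}<0, \qquad Q-2+\alpha^-=\frac{Q-2}{2}-\sqrt{\lambda+\left(\tfrac{Q-2}{2}\right)^2}>0.
$$
Since $\lambda>-\left(\frac{Q-2}{2}\right)^2$, the relevant case in \eqref{cd-bl-V} is the first one, i.e.\ the non-strict inequality $(Q-2+\alpha^-)p\geq Q+a+\alpha^-$.

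Next, for Part (I), I would rewrite the threshold by adding and subtracting $Q-2+\alpha^-$ on the right-hand side:
$$
(Q-2+\alpha^-)p\geq (Q-2+\alpha^-)+(a+2)\ \Longleftrightarrow\ (Q-2+\alpha^-)(p-1)\geq a+2.
$$
Using $Q-2+\alpha^->0$ we may divide to get $p-1\geq\frac{a+2}{Q-2+\alpha^-}$. If $a\leq -2$, then the right-hand side is $\leq 0$, so every $p>1$ satisfies the condition, yielding case (I)(i). If $a>-2$, we recover exactly the threshold $p\geq 1+\frac{a+2}{Q-2+\alpha^-}$ of case (I)(ii). Invoking Theorem~\ref{T2}(I) gives nonexistence in both situations.

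For Part (II), I would apply the same algebraic reformulation to the strict condition \eqref{cd-ex-V}, obtaining
$$
(Q-2+\alpha^-)(p-1)<a+2,\qquad\text{i.e.}\qquad p<1+\frac{a+2}{Q-2+\alpha^-}.
$$
For this to be compatible with $p>1$, one needs $a+2>0$, i.e.\ $a>-2$, which is the hypothesis of (II); Theorem~\ref{T2}(II) then provides stationary $C^\infty$ solutions for some $f\in L^{1,+}(\partial B_\HH)$. There is no real obstacle here beyond being careful that the sign of $Q-2+\alpha^-$ is positive throughout, so that multiplying/dividing does not flip any inequality.
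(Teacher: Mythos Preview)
Your proposal is correct and follows essentially the same route as the paper: the paper simply notes that $Q-2+\alpha^->0$ in the range $-\left(\frac{Q-2}{2}\right)^2<\lambda<0$ and then states that the corollary follows from Theorem~\ref{T2}. Your write-up just makes explicit the straightforward algebra (rewriting $Q+a+\alpha^-=(Q-2+\alpha^-)+(a+2)$ and dividing by the positive quantity $Q-2+\alpha^-$) that the paper leaves to the reader.
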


\begin{remark}
From Corollaries \ref{CR3.9} and \ref{CR3.10}, we deduce that, if  $$-\left(\frac{Q-2}{2}\right)^2\leq \lambda<0$$ and 	$V$ is the function  given by \eqref{V-f}, where $a>-2$, then
\begin{equation}\label{p-jl}
p_{cr}(Q,\lambda,a):= 1+\frac{a+2}{Q-2+\alpha^+}
\end{equation}
is a critical exponent of the second kind for \eqref{P}--\eqref{BC}.   	
\end{remark}

We next assume that $\lambda=0$.  In this case, one has
$$
Q-2+\alpha^-=0$$ and $$Q+a+\alpha^-= a+2.
$$
Hence, from Theorem \ref{T2}, we deduce the following result.

\begin{corollary}\label{CR3.13}
Let $k\geq 1$ be an integer,	 $\lambda=0$ and $V$ be the function  given by \eqref{V-f}.
\begin{itemize}
\item[{\rm{(I)}}] If $a\leq -2$ 	and $f\in L^{1,+}(\partial B_\HH)$, then for all $p>1$, the problem \eqref{P}--\eqref{BC} admits no weak solution.
\item[{\rm{(II)}}] If $a>-2$, then for all $p>1$, the problem \eqref{P}--\eqref{BC} admits stationary solutions $u\in C^\infty(B_\HH\backslash\{0\})$ for some  $f\in L^{1,+}(\partial B_\HH)$.
\end{itemize}
\end{corollary}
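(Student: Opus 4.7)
The plan is to derive Corollary \ref{CR3.13} as a direct specialization of Theorem \ref{T2} to $\lambda = 0$. First I would compute the auxiliary parameters $\alpha^\pm$: since
$$\alpha^\pm = -\frac{Q-2}{2} \pm \sqrt{\left(\frac{Q-2}{2}\right)^2} = -\frac{Q-2}{2} \pm \frac{Q-2}{2},$$
one gets $\alpha^- = -(Q-2)$ and $\alpha^+ = 0$. Substituting $\alpha^-$ into the two algebraic quantities governing \eqref{cd-bl-V} and \eqref{cd-ex-V} gives
$$Q-2+\alpha^- = 0, \qquad Q+a+\alpha^- = a+2.$$

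For part (I), because $\lambda = 0 > -\left(\frac{Q-2}{2}\right)^2$, the relevant branch of \eqref{cd-bl-V} is the non-strict inequality $(Q-2+\alpha^-)p \geq Q+a+\alpha^-$, which collapses to $0 \geq a+2$, that is $a \leq -2$. In this regime the condition is independent of $p$, so Theorem \ref{T2}(I) rules out every weak solution to \eqref{P}--\eqref{BC} for all $p > 1$ and all integers $k \geq 1$, provided $f \in L^{1,+}(\partial B_\HH)$. Note in particular that the critical case $a = -2$ is already covered by the non-strict inequality, which is why part (I) is stated as $a \leq -2$ rather than $a < -2$.

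For part (II), the existence condition \eqref{cd-ex-V} reads $(Q-2+\alpha^-)p < Q+a+\alpha^-$, which simplifies to $0 < a+2$, i.e.\ $a > -2$. Again the condition does not depend on $p$, so Theorem \ref{T2}(II) supplies, for every $p > 1$, a stationary solution $u \in C^\infty(B_\HH \setminus \{0\})$ attaining some boundary datum $f \in L^{1,+}(\partial B_\HH)$.

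Since both assertions reduce to a one-line substitution into Theorem \ref{T2} followed by an elementary arithmetic comparison, no serious obstacle arises. The only subtle point worth emphasising is that when $\lambda = 0$ the coefficient of $p$ on the left-hand side of \eqref{cd-bl-V}--\eqref{cd-ex-V} vanishes, so the exponent $p$ completely drops out of the threshold condition; this is precisely what produces the uniform-in-$p$ dichotomy around $a = -2$ and distinguishes the $\lambda = 0$ behaviour from that of Corollaries \ref{CR3.9} and \ref{CR3.10}.
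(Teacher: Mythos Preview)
Your proposal is correct and follows exactly the paper's approach: the paper likewise specializes Theorem \ref{T2} to $\lambda=0$, noting that $Q-2+\alpha^-=0$ and $Q+a+\alpha^-=a+2$, and then reads off the corollary. Your write-up is in fact more detailed than the paper's (which states only those two identities and invokes Theorem \ref{T2}), but the substance is identical.
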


\begin{remark}
From Corollary \ref{CR3.13}, if $\lambda=0$ and 	$V$ is the function  given by \eqref{V-f}, then $$a^*:=-2$$ is a critical exponent of the third kind for the problem \eqref{P}--\eqref{BC}.
\end{remark}

We finally assume that $\lambda>0$. In this case, one has
$$
Q-2+\alpha^-<0.
$$
Then  Theorem \ref{T2} yields the following result.

\begin{corollary}\label{CR3.15}
Let $k\geq 1$ be an integer,	 $\lambda>0$ and $V$ be the function  given by \eqref{V-f}.
\begin{itemize}
\item[{\rm{(I)}}] Let $f\in L^{1,+}(\partial B_\HH)$.

\begin{itemize}
\item[{\rm{(i)}}] If $a\geq -2$, then for all $p>1$, the problem \eqref{P}--\eqref{BC} admits no weak solution.
\item[{\rm{(ii)}}] If $a<-2$, then for all
$$
1<p\leq  1+\frac{a+2}{Q-2+\alpha^+},
$$
the problem \eqref{P}--\eqref{BC} admits no weak solution.
\end{itemize}
\item[{\rm{(II)}}] If $a<-2$ and
$$
p>1+\frac{a+2}{Q-2+\alpha^+},
$$
then the problem \eqref{P}--\eqref{BC} admits stationary solutions $u\in C^\infty(B_\HH\backslash\{0\})$ for some  $f\in L^{1,+}(\partial B_\HH)$.
\end{itemize}
\end{corollary}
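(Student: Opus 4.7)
The plan is to derive Corollary \ref{CR3.15} directly from Theorem \ref{T2}, by specializing the algebraic thresholds \eqref{cd-bl-V} and \eqref{cd-ex-V} to the regime $\lambda > 0$ and carefully tracking signs. No new analytic machinery should be required: everything is driven by the sign of the coefficient $Q-2+\alpha^-$ of the parameter $p$ in \eqref{cd-bl-V}.

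The first step is the sign observation. Since $\lambda > 0$, one has $\sqrt{\lambda + (Q-2)^2/4} > (Q-2)/2$, so $\alpha^+ > 0$, $\alpha^- < -(Q-2)$, and in particular
$$Q - 2 + \alpha^- < 0.$$
This is precisely the case in which the coefficient of $p$ in \eqref{cd-bl-V} is negative, and that negative sign is what interchanges the roles of $a \geq -2$ and $a < -2$ relative to Corollaries \ref{CR3.9}, \ref{CR3.10} and \ref{CR3.13} (where the coefficient was nonnegative). I would also record the elementary identity $Q-2+\alpha^- = -\alpha^+$, which follows from $\alpha^+ + \alpha^- = -(Q-2)$ and will be used to rewrite the thresholds of Theorem \ref{T2} in the $\alpha^+$-form announced in the corollary.

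For part (I), I would rewrite \eqref{cd-bl-V} as $(Q-2+\alpha^-)(p-1) \geq a+2$, equivalently $\alpha^+(p-1) \leq -(a+2)$. For (I)(i), when $a \geq -2$, the right-hand side is nonpositive, and a short sign discussion verifies that the hypothesis of Theorem \ref{T2}(I) is satisfied for every $p>1$, yielding the unconditional nonexistence statement. For (I)(ii), when $a < -2$, dividing through by $\alpha^+ > 0$ (equivalently, by $Q-2+\alpha^- < 0$ and flipping the inequality) gives the explicit upper bound
$$p \leq 1 + \frac{a+2}{Q-2+\alpha^+},$$
and Theorem \ref{T2}(I) supplies the nonexistence conclusion on this interval.

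For part (II), the same sign-translation applied to \eqref{cd-ex-V} converts the strict inequality $(Q-2+\alpha^-)p < Q+a+\alpha^-$ into $p > 1 + (a+2)/(Q-2+\alpha^+)$ under the assumption $a<-2$, and Theorem \ref{T2}(II) then produces a smooth radial stationary supersolution together with a compatible datum $f \in L^{1,+}(\partial B_\HH)$. The only real obstacle throughout is the sign bookkeeping: the negativity of $Q-2+\alpha^-$ reverses every inequality obtained by division, and the conversion between the $\alpha^-$-form of Theorem \ref{T2} and the $\alpha^+$-form of the corollary must be carried out with attention to direction. No other step is expected to be delicate.
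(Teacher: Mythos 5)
Your overall route --- specialize Theorem \ref{T2} to $\lambda>0$ and track the sign of $Q-2+\alpha^-$ --- is exactly the paper's (its ``proof'' is the observation $Q-2+\alpha^-<0$ followed by ``Theorem \ref{T2} yields the result''), but your sign bookkeeping goes the wrong way at precisely the decisive steps. Writing \eqref{cd-bl-V} as $(Q-2+\alpha^-)(p-1)\ge a+2$ is correct, and for $\lambda>0$ the left-hand side is negative for every $p>1$. Hence when $a\ge -2$ the right-hand side is nonnegative and the hypothesis of Theorem \ref{T2}(I) is \emph{never} satisfied --- not ``satisfied for every $p>1$'' as you assert in (I)(i). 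In that regime it is \eqref{cd-ex-V} that holds for all $p>1$, so Theorem \ref{T2}(II) gives stationary solutions; indeed for $\lambda>0$, $a\ge-2$ even a small constant $u\equiv\varepsilon$ with $f\equiv\varepsilon$ works, since $\lambda\varepsilon|\xi|_\HH^{-2}\ge \varepsilon^{p}|\xi|_\HH^{a}$ on $B_\HH$ as soon as $\varepsilon^{p-1}\le\lambda$. So claim (I)(i) cannot be extracted from Theorem \ref{T2}(I) by any ``short sign discussion''; that step of your argument is false, and no rearrangement of it can succeed because Theorem \ref{T2}(II) points in the opposite direction there.

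The threshold translation in (I)(ii) and (II) is also incorrect. Dividing $(Q-2+\alpha^-)(p-1)\ge a+2$ by the negative number $Q-2+\alpha^-$ gives $p\le 1+\frac{a+2}{Q-2+\alpha^-}=1-\frac{a+2}{\alpha^+}$, whereas the bound you claim, $1+\frac{a+2}{Q-2+\alpha^+}=1-\frac{a+2}{\alpha^-}$, is a different number: the identity $Q-2+\alpha^-=-\alpha^+$ does not convert the former into the latter (it is $Q-2+\alpha^+$ that equals $-\alpha^-$). A quick sanity check exposes the slip: for $\lambda>0$ and $a<-2$ one has $Q-2+\alpha^+>0$ and $a+2<0$, so $1+\frac{a+2}{Q-2+\alpha^+}<1$ and your nonexistence range in (I)(ii) would be empty, while the range actually produced by Theorem \ref{T2} is the nonempty interval $1<p\le 1+\frac{a+2}{Q-2+\alpha^-}$. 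In short, a correct specialization of Theorem \ref{T2} to $\lambda>0$ yields thresholds expressed through $Q-2+\alpha^-$ and yields existence, not nonexistence, when $a\ge-2$; this does not match the statement as printed (which therefore appears to contain misprints in $\alpha^{\pm}$ and in the role of $a\gtrless-2$), and your write-up hides the mismatch behind sign claims that do not hold rather than deriving the corollary.
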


\begin{remark}
From Corollary \ref{CR3.15}, if $\lambda>0$ and 	$V$ is the function  given by \eqref{V-f}, then the real number $$p_{cr}(Q,\lambda,a):= 1+\frac{a+2}{Q-2+\alpha^+}$$ given by \eqref{p-jl} is a critical exponent of the first  kind for \eqref{P}--\eqref{BC}.
\end{remark}

\section{Auxiliary results}\label{sec4}
Let $k\geq 1$ be an integer,  $p>1$,  $\lambda\geq -\left(\frac{Q-2}{2}\right)^2$,  $V=V(\xi)>0$ a.e. in $B_\HH$ and $f=f(\xi)\in L^1(\partial B_\HH)$.

\subsection{Admissible test functions}
Two kinds of admissible test functions will be  introduced in this subsection.

Let $K$ be the radial function defined by \eqref{K-f}. We collect below some properties of the function $K$.

\begin{lemma}\label{L4.1}
The function $K$ satisfies the following properties:
\begin{itemize}
\item[{\rm{(i)}}] $K\in C^2(\overline{B_\HH}\backslash\{0\})$;	
\item[{\rm{(ii)}}] $K\geq 0$;
\item[{\rm{(iii)}}] $-\displaystyle\frac{1}{\psi}\Delta_\HH K+\displaystyle\frac{\lambda}{|\xi|_\HH^2}K=0$ in $B_\HH\backslash\{0\}$;
\item[{\rm{(iv)}}] $K|_{\partial B_\HH}=0$ and
\begin{equation}\label{nder}
A\nabla_{\RR^{2N+1}}K\cdot n|_{\partial B_\HH}=\left\{\begin{array}{llll}
-2\sqrt{\lambda+\left(\frac{Q-2}{2}\right)^2} \displaystyle \frac{\psi}{|\nabla_{\RR^{2N+1}}|\xi|_\HH|}&\mbox{if}&  \lambda> -\left(\frac{Q-2}{2}\right)^2,\\
-\displaystyle\frac{\psi}{|\nabla_{\RR^{2N+1}}|\xi|_\HH|}&\mbox{if}& \lambda= -\left(\frac{Q-2}{2}\right)^2.
\end{array}
\right.
\end{equation}
\end{itemize}
\end{lemma}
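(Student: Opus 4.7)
The plan is to verify (i)--(iv) in turn, working throughout in the radial representation $K(\xi) = F(\rho)$ with $F := \sigma_\lambda$ and $\rho := |\xi|_\HH$. Part (i) is automatic: $\xi \mapsto |\xi|_\HH$ is smooth on $\HH^N \setminus \{0\}$ from its explicit formula, and $\sigma_\lambda$ is smooth on $(0,\infty)$ in each of its two definition branches, so the composition is $C^\infty$ and in particular $C^2$ on $\overline{B_\HH} \setminus \{0\}$. For (ii), when $\lambda > -\left(\frac{Q-2}{2}\right)^2$ one has $\alpha^- < \alpha^+$, hence $s^{\alpha^-} \geq s^{\alpha^+}$ for $s \in (0,1]$; when $\lambda = -\left(\frac{Q-2}{2}\right)^2$ the factor $-\ln s$ is nonnegative on $(0,1]$. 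In both cases $\sigma_\lambda \geq 0$ on $(0,1]$, which suffices since $|\xi|_\HH \leq 1$ on $\overline{B_\HH}$.

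For (iii), I plug the radial formula for $\Delta_\HH$ recalled in Section~\ref{sec2} into the equation. The common factor $\psi$ then drops out, reducing the PDE to the Euler ODE
\[
F''(\rho) + \frac{Q-1}{\rho} F'(\rho) - \frac{\lambda}{\rho^2} F(\rho) = 0, \qquad \rho \in (0,\infty),
\]
whose indicial equation $\alpha^2 + (Q-2)\alpha - \lambda = 0$ has discriminant $(Q-2)^2 + 4\lambda$ and roots precisely $\alpha^\pm$. When $\lambda > -\left(\frac{Q-2}{2}\right)^2$ these roots are distinct, so $\rho^{\alpha^+}$ and $\rho^{\alpha^-}$ span the solution space and $\sigma_\lambda(\rho) = \rho^{\alpha^-} - \rho^{\alpha^+}$ is automatically a solution; when $\lambda = -\left(\frac{Q-2}{2}\right)^2$ the repeated root $\alpha^- = \alpha^+ = -\frac{Q-2}{2}$ gives the second independent solution $\rho^{\alpha^-} \ln \rho$, and a one-line substitution confirms that $-\rho^{\alpha^-}\ln \rho$ also solves the ODE.

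For (iv), $\sigma_\lambda(1) = 0$ in both branches yields $K|_{\partial B_\HH} = 0$. For the conormal derivative, differentiating through the radial representation gives $\nabla_{\R^{2N+1}} K = \sigma_\lambda'(\rho)\, \nabla_{\R^{2N+1}}\rho$, whence
\[
A \nabla_{\R^{2N+1}} K \cdot n\big|_{\partial B_\HH} = \sigma_\lambda'(1)\, \frac{A(z)\nabla_{\R^{2N+1}} \rho \cdot \nabla_{\R^{2N+1}} \rho}{\left|\nabla_{\R^{2N+1}} |\xi|_\HH\right|}\bigg|_{|\xi|_\HH = 1}.
\]
The crucial identity $A(z)\nabla_{\R^{2N+1}} \rho \cdot \nabla_{\R^{2N+1}} \rho = |\nabla_\HH \rho|^2 = \psi$ follows from the divergence-form representation $\Delta_\HH = \diver_{\R^{2N+1}}(A(z)\nabla_{\R^{2N+1}})$ recalled earlier, together with the identity $|\nabla_\HH \rho|^2 = \psi$ already noted in the preliminaries. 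A direct differentiation finally gives $\sigma_\lambda'(1) = \alpha^- - \alpha^+ = -2\sqrt{\lambda + \left(\frac{Q-2}{2}\right)^2}$ in the strict case, and $\sigma_\lambda'(1) = -1$ in the borderline case (the logarithm kills the first summand at $s=1$ and only $-s^{\alpha^- - 1}$ survives), matching \eqref{nder} exactly. The main obstacle here is not conceptual but bookkeeping: cleanly linking the Euclidean quadratic form $A\nabla \rho \cdot \nabla \rho$ to the intrinsic quantity $|\nabla_\HH \rho|^2 = \psi$, so that the correct surface factor $\psi/\left|\nabla_{\R^{2N+1}} |\xi|_\HH\right|$ appears unambiguously in the final expression.
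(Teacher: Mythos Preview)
Your proof is correct and follows essentially the same approach as the paper, which dispatches (i), (ii), (iv) as immediate from the definition of $K$ and obtains (iii) from the radial formula \eqref{lap}; you have simply filled in the details the paper omits (the Euler ODE reduction, the indicial roots $\alpha^\pm$, and the computation of $\sigma_\lambda'(1)$ via the factorization $A\nabla\rho\cdot\nabla\rho=|\nabla_\HH\rho|^2=\psi$).
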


\begin{proof}
(i), (ii) and (iv) follow immediately from the definition  of $K$. On the other hand, since $K$ is a radial function, making use of \eqref{lap}, we obtain (iii).		
\end{proof}

Le $\vartheta,\zeta\in C^\infty([0,\infty))$  be two cut-off function satisfying the following properties:
\begin{equation}\label{v-ppts}
0\leq \vartheta\leq 1,\,\, \supp(\vartheta)\subset\subset (0,1)
\end{equation}
and
\begin{equation}\label{ppts-zeta}
\zeta\mbox{ is nondecreasing},\,\, \zeta= 0\mbox{ in } \left[0,\frac{1}{2}\right],\,\,\zeta= 1\mbox{ in } [1,\infty).
\end{equation}
For $\iota,T,R\gg 1$, let
\begin{equation}\label{betaT-f}
\beta_T(t):=\vartheta^\iota\left(\frac{t}{T}\right),\quad t\geq 0,
\end{equation}
and
\begin{equation}\label{gamaR-f}
\gamma_R(\xi):=K(\xi) \zeta^\iota\left(R|\xi|_\HH\right)=\sigma_\lambda(\rho)\zeta^\iota\left(R\rho\right),\quad \xi\in \overline{B_\HH}\backslash\{0\},\,\, \rho=|\xi|_\HH.
\end{equation}
We introduce test functions of the form
\begin{equation}\label{testf1}
\varphi(t,x):=\beta_T(t)\gamma_R(\xi),\quad (t,\xi)\in Q.	
\end{equation}

\begin{lemma}\label{L4.2}
The function $\varphi$ defined by \eqref{testf1} belongs to $\Phi$.	
\end{lemma}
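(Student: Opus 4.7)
\emph{Plan.} My approach is to verify the five admissibility conditions (A$_1$)--(A$_5$) in order, exploiting the product structure $\varphi(t,\xi)=\beta_T(t)\gamma_R(\xi)$ and appealing throughout to Lemma \ref{L4.1} for the properties of $K$. Essentially everything is direct bookkeeping from the cut-off properties \eqref{v-ppts}--\eqref{ppts-zeta}; the only substantive point is (A$_5$), where one must see that the boundary contribution reduces to $A\nabla_{\RR^{2N+1}}K\cdot n$, whose sign is then pinned down by \eqref{nder}.

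For (A$_1$): $\beta_T$ is $C^\infty(\RR)$ because $\vartheta$ is, and $\zeta^\iota(R|\cdot|_\HH)$ is smooth on $\overline{B_\HH}\setminus\{0\}$ (the origin is excluded and $|\cdot|_\HH$ is smooth off the origin), while $K\in C^2(\overline{B_\HH}\setminus\{0\})$ by Lemma \ref{L4.1}(i); hence $\varphi\in C^{k,2}_{t,\xi}(Q)$. For (A$_2$): $\beta_T\geq 0$ (since $0\leq\vartheta\leq 1$) and $\gamma_R\geq 0$ (since $K\geq 0$ by Lemma \ref{L4.1}(ii) and $\zeta\geq 0$ by \eqref{ppts-zeta}). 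For (A$_3$): $\supp\beta_T$ is compactly contained in $(0,T)$ by \eqref{v-ppts}, while $\gamma_R$ vanishes on $\{|\xi|_\HH\leq 1/(2R)\}$ by \eqref{ppts-zeta}, so $\supp\gamma_R\subset\{1/(2R)\leq |\xi|_\HH\leq 1\}$ for $R\gg 1$, which is a compact subset of $\overline{B_\HH}\setminus\{0\}$; thus $\supp\varphi\subset\subset Q$. For (A$_4$): Lemma \ref{L4.1}(iv) gives $K|_{\partial B_\HH}=0$, whence $\gamma_R|_{\partial B_\HH}=0$ and $\varphi|_\Gamma=0$.

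For (A$_5$) I would compute, using that $\beta_T$ depends only on $t$,
$$
\nabla_{\RR^{2N+1}}\varphi = \beta_T\,\nabla_{\RR^{2N+1}}\gamma_R = \beta_T\Bigl[\zeta^\iota(R|\xi|_\HH)\nabla_{\RR^{2N+1}}K + \iota R\,K\,\zeta^{\iota-1}(R|\xi|_\HH)\zeta'(R|\xi|_\HH)\nabla_{\RR^{2N+1}}|\xi|_\HH\Bigr].
$$
On $\partial B_\HH$ the factor $K$ vanishes by Lemma \ref{L4.1}(iv), killing the second term, and since $R\gg 1$ one has $R|\xi|_\HH=R\geq 1$ on $\partial B_\HH$, so $\zeta^\iota(R|\xi|_\HH)=1$ by \eqref{ppts-zeta}. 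Consequently on $\Gamma$
$$
A\nabla_{\RR^{2N+1}}\varphi\cdot n = \beta_T(t)\,A\nabla_{\RR^{2N+1}}K\cdot n,
$$
and formula \eqref{nder} shows that the right-hand side equals $\beta_T(t)$ times a nonpositive multiple of $\psi/|\nabla_{\RR^{2N+1}}|\xi|_\HH|$; since $\beta_T\geq 0$, this delivers (A$_5$). The only genuine ``obstacle'' is the one just highlighted: recognising that the chain-rule term involving $\zeta'$ is annihilated at $\partial B_\HH$ by the vanishing of $K$, so that the sign in (A$_5$) is governed entirely by the explicit boundary formula \eqref{nder}.
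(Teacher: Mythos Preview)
Your proof is correct and follows essentially the same approach as the paper's. The only cosmetic difference lies in (A$_5$): the paper simply notes that $\gamma_R=K$ on the whole neighbourhood $\overline{B_\HH}\setminus B_\HH(0,1/R)$ of the boundary (so $\nabla_{\RR^{2N+1}}\gamma_R=\nabla_{\RR^{2N+1}}K$ there without any computation), whereas you arrive at the same boundary identity by expanding $\nabla_{\RR^{2N+1}}\gamma_R$ via the product rule and killing the $\zeta'$-term using $K|_{\partial B_\HH}=0$.
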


\begin{proof}
(A$_1$)--(A$_3$) follow immediately from Lemma \ref{L4.1} (i)-(ii), \eqref{v-ppts}, \eqref{ppts-zeta}, \eqref{betaT-f}, \eqref{gamaR-f} and \eqref{testf1}. Furthermore, by Lemma \ref{L4.1} (iv)  ($K|_{\partial B_\HH}=0$) and \eqref{testf1}, we obtain (A$_4$). On the other hand, by \eqref{ppts-zeta} and \eqref{gamaR-f}, we have
$$
\gamma_R(\xi)=K(\xi),\quad \xi\in \overline{B_\HH}\backslash B_\HH\left(0,\frac{1}{R}\right),
$$
which implies by \eqref{nder} and \eqref{testf1} that (notice that $\beta_T\geq 0$)
$$
\begin{aligned}
A\nabla_{\RR^{2N+1}}\varphi(t,\xi)\cdot n(\xi)&=\beta_T(t)A\nabla_{\RR^{2N+1}}\gamma_R(\xi)\cdot n(\xi)\\
&=\beta_T(t)A\nabla_{\RR^{2N+1}}K(\xi)\cdot n(\xi)\leq 0
\end{aligned}
$$
for all $(t,\xi)\in \Gamma$. This shows that $\varphi$ satisfies (A$_5$). Consequently,  $\varphi$  is an admissible test function in the sense of Definition \ref{def3.1}.
\end{proof}

Let $\ell: \mathbb{R}\to [0,1]$ be a nondecreasing smooth function satisfying
\begin{equation}\label{ell-f}
\ell(s):=\left\{\begin{array}{llll}
0 &\mbox{if}& s\leq 0,	\\{}\\
1 &\mbox{if}& s\geq \frac{1}{2}.
\end{array}
\right.
\end{equation}
For $\iota,T,R\gg 1$, let  $\beta_T$ be the function defined by \eqref{betaT-f} and
\begin{equation}\label{muRR}
\begin{split}\mu_R(\xi):&=K(\xi) \ell^\iota\left(1+\frac{\ln |\xi|_\HH}{\ln R}\right)\\&=K(\rho)\ell^\iota\left(1+\frac{\ln \rho}{\ln R}\right),\quad \xi\in \overline{B_\HH}\backslash\{0\},\,\, \rho=|\xi|_\HH.\end{split}
\end{equation}
We introduce test functions of the form
\begin{equation}\label{testf2}
\varphi(t,\xi):=\beta_T(t)\mu_R(\xi),\,\quad (t,\xi)\in Q.		
\end{equation}

\begin{lemma}\label{L4.3}
The function $\varphi$ defined by \eqref{testf2} belongs to $\Phi$.	
\end{lemma}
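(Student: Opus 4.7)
The plan is to verify the five conditions (A$_1$)--(A$_5$) of Definition \ref{def3.1} one by one, mirroring the argument of Lemma \ref{L4.2}. The smoothness condition (A$_1$) will follow at once from Lemma \ref{L4.1}(i), together with the smoothness of the cut-offs $\vartheta$ and $\ell$, since $\mu_R$ is the product of $K\in C^2(\overline{B_\HH}\setminus\{0\})$ with a $C^\infty$ function of $|\xi|_\HH$ that is supported away from $\xi=0$. Nonnegativity (A$_2$) is immediate from Lemma \ref{L4.1}(ii), $0\leq\ell\leq 1$, and $0\leq\vartheta\leq 1$.

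For the compact support condition (A$_3$), the decisive point is the behavior of $\ell^\iota\bigl(1+\tfrac{\ln|\xi|_\HH}{\ln R}\bigr)$: since $\ell(s)=0$ for $s\leq 0$ by \eqref{ell-f}, this factor vanishes whenever $|\xi|_\HH\leq 1/R$, so the $\xi$-support of $\mu_R$ is contained in $\{1/R\leq|\xi|_\HH\leq 1\}$, which is a compact subset of $\overline{B_\HH}\setminus\{0\}$. In the time variable, $\supp\beta_T\subset\subset(0,T)$ by \eqref{v-ppts}--\eqref{betaT-f}, so $\supp\varphi\subset\subset Q$. Condition (A$_4$) follows from Lemma \ref{L4.1}(iv): on $\partial B_\HH$ we have $|\xi|_\HH=1$, so $\mu_R(\xi)=K(\xi)\ell^\iota(1)=K(\xi)=0$.

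The only point requiring a small computation is (A$_5$). The key observation will be that $\mu_R$ coincides with $K$ in a full Euclidean neighborhood of $\partial B_\HH$, not merely on the boundary. Indeed, for $|\xi|_\HH\geq R^{-1/2}$ one has $1+\tfrac{\ln|\xi|_\HH}{\ln R}\geq 1/2$, and by \eqref{ell-f} the factor $\ell^\iota$ equals $1$ there. Consequently $\mu_R\equiv K$ on the annular neighborhood $\{R^{-1/2}\leq|\xi|_\HH\leq 1\}$ of $\partial B_\HH$, so $\nabla_{\RR^{2N+1}}\mu_R=\nabla_{\RR^{2N+1}}K$ on $\partial B_\HH$. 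Multiplying by $\beta_T(t)\geq 0$ and applying \eqref{nder} gives
\[
A\nabla_{\RR^{2N+1}}\varphi(t,\xi)\cdot n(\xi)=\beta_T(t)\,A\nabla_{\RR^{2N+1}}K(\xi)\cdot n(\xi)\leq 0
\]
on $\Gamma$, which is (A$_5$). The hardest step is really just this neighborhood argument; once it is in place, the remaining verifications are line-by-line repetitions of Lemma \ref{L4.2}.
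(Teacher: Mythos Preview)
Your proof is correct and follows essentially the same approach as the paper's own proof: you verify (A$_1$)--(A$_4$) by direct inspection using Lemma~\ref{L4.1} and the properties of the cut-offs, and for (A$_5$) you establish the key identity $\mu_R\equiv K$ on the annulus $\{R^{-1/2}\leq|\xi|_\HH\leq 1\}$ (which is exactly the paper's \eqref{jalh}) and then invoke \eqref{nder}. The computations and the neighborhood argument match the paper line for line.
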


\begin{proof}
The proof is similar to that of Lemma \ref{L4.2}. Namely, (A$_1$)--(A$_3$) follow immediately from Lemma \ref{L4.1} (i)-(ii), \eqref{v-ppts},\eqref{betaT-f}, \eqref{ell-f},  \eqref{muRR} and \eqref{testf2}. 	Furthermore, by Lemma \ref{L4.1} (iv)  ($K|_{\partial B_\HH}=0$) and \eqref{testf2}, we obtain (A$_4$). On the other hand, by \eqref{ell-f} and \eqref{muRR}, we have
\begin{equation}\label{jalh}
\mu_R(\xi)=K(\xi),\quad \xi\in \overline{B_\HH}\backslash B_\HH\left(0,\frac{1}{\sqrt R}\right).
\end{equation}
Then  (A$_5$) follows from  \eqref{nder}, \eqref{testf2} and \eqref{jalh}.
\end{proof}

\subsection{A priori estimate}
For all $\varphi\in \Phi$, we consider the integral terms
\begin{equation}\label{J1}
J_1(\varphi):= 	\int_Q \varphi^{\frac{-1}{p-1}}
\left|\frac{\partial^k\varphi}{\partial t^k}\right|^{\frac{p}{p-1}}V^{\frac{-1}{p-1}}\psi\,d\xi\,dt
\end{equation}
and
\begin{equation}\label{J2}
J_2(\varphi):=\int_Q  \varphi^{\frac{-1}{p-1}} \left|-\Delta_\HH \varphi+\frac{\lambda}{|\xi|_\HH^2}\psi\varphi\right|^{\frac{p}{p-1}}V^{\frac{-1}{p-1}}\psi^{\frac{-1}{p-1}}\,d\xi\,dt.
\end{equation}
Let $\varphi\in \Phi$ be such that $J_i(\varphi)<\infty$ for all $i=1,2$. Assume that
$u$ is a weak solution to  \eqref{P}--\eqref{BC}. By \eqref{ws}, we obtain
\begin{equation}\label{S1-L4.4}
\begin{aligned}
&\int_Q |u|^p\varphi V\psi\,d\xi\,dt-\int_{\Gamma} f A\nabla_{\RR^{2N+1}}\varphi\cdot n\,d\xi \,dt\\
&\leq  \int_{Q} |u|\left|-\Delta_\HH \varphi+\frac{\lambda}{|\xi|_\HH^2}\psi\varphi \right|\,d\xi\,dt + \int_{Q} |u| \psi \left|\frac{\partial ^k\varphi}{\partial t^k}\right|\,d\xi\,dt.
\end{aligned}
\end{equation}
Making use of Young's inequality, we obtain
\begin{equation}\label{S2-L4.4}
\begin{aligned}
\int_{Q} |u| \psi \left|\frac{\partial ^k\varphi}{\partial t^k}\right|\,d\xi\,dt&=\int_{Q} |u| \psi^{\frac{1}{p}}\varphi^{\frac{1}{p}}V^{\frac{1}{p}} \psi^{\frac{p-1}{p}}\varphi^{\frac{-1}{p}} V^{\frac{-1}{p}}\left|\frac{\partial ^k\varphi}{\partial t^k}\right|\,d\xi\,dt	\\
&\leq \frac{1}{2}\int_Q |u|^p\varphi V\psi\,d\xi\,dt+C J_1(\varphi).
\end{aligned}
\end{equation}
Similarly, we have
\begin{equation}\label{S3-L4.4}
\begin{aligned}
&\int_{Q} |u|\left|-\Delta_\HH \varphi+\frac{\lambda}{|\xi|_\HH^2}\psi\varphi \right|\,d\xi\,dt\\
&=\int_{Q} |u| \psi^{\frac{1}{p}}\varphi^{\frac{1}{p}}V^{\frac{1}{p}}  \psi^{\frac{-1}{p}}\varphi^{\frac{-1}{p}} V^{\frac{-1}{p}} \left|-\Delta_\HH \varphi+\frac{\lambda}{|\xi|_\HH^2}\psi\varphi \right|\,d\xi\,dt	\\
&\leq \frac{1}{2}\int_Q |u|^p\varphi V\psi\,d\xi\,dt+C J_2(\varphi).
\end{aligned}
\end{equation}
Then, in view of \eqref{S1-L4.4}, \eqref{S2-L4.4} and \eqref{S3-L4.4}, we obtain the following a priori estimate.

\begin{lemma}\label{L4.4}
If $u$ is a weak solution to  \eqref{P}--\eqref{BC}, then
\begin{equation}\label{apest}
-\int_{\Gamma} f A\nabla_{\RR^{2N+1}}\varphi\cdot n\,{{dH_{2N}}} \,dt
\leq C \sum_{i=1}^2 J_i(\varphi)
\end{equation}
for every $\varphi\in \Phi$, provided $J_i(\varphi)<\infty$ for all $i=1,2$.  
\end{lemma}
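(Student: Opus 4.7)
The plan is to start from the weak-solution inequality \eqref{ws} and treat the two terms on its right-hand side separately, using Young's inequality to absorb copies of $\int_Q |u|^p\varphi V\psi\,d\xi\,dt$ back into the left-hand side. First, I would apply the triangle inequality to the right-hand side of \eqref{ws}, bounding
$$\int_Q u\left((-1)^k\frac{\partial^k\varphi}{\partial t^k}-\frac{1}{\psi}\Delta_\HH\varphi+\frac{\lambda}{|\xi|_\HH^2}\varphi\right)\psi\,d\xi\,dt$$
by the sum of $\int_Q |u|\psi\left|\partial_t^k\varphi\right|\,d\xi\,dt$ and $\int_Q |u|\bigl|{-\Delta_\HH\varphi + (\lambda/|\xi|_\HH^2)\psi\varphi}\bigr|\,d\xi\,dt$. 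This gives an inequality of the form displayed in \eqref{S1-L4.4}.

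Next, I would handle each of these two integrals by a weighted Young inequality with conjugate exponents $p$ and $p/(p-1)$. The trick is to factor the integrand into a piece resembling $(|u|^p\varphi V\psi)^{1/p}$ and a residual piece carrying the correct weights so that Young's inequality reproduces one of the $J_i(\varphi)$. For the time-derivative term, I would write
$$|u|\psi\left|\frac{\partial^k\varphi}{\partial t^k}\right| = \left[|u|\,(\varphi V\psi)^{1/p}\right]\cdot\left[\psi^{(p-1)/p}\varphi^{-1/p}V^{-1/p}\left|\frac{\partial^k\varphi}{\partial t^k}\right|\right],$$
so that Young's inequality yields $\tfrac{1}{2}\int_Q |u|^p\varphi V\psi\,d\xi\,dt + C\,J_1(\varphi)$, which is exactly \eqref{S2-L4.4}. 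For the spatial term, the analogous split with the second factor $\psi^{-1/p}\varphi^{-1/p}V^{-1/p}\bigl|{-\Delta_\HH\varphi + (\lambda/|\xi|_\HH^2)\psi\varphi}\bigr|$ gives $\tfrac{1}{2}\int_Q |u|^p\varphi V\psi\,d\xi\,dt + C\,J_2(\varphi)$, matching the definition \eqref{J2}.

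Finally, I would sum the two bounds, absorb the resulting $\int_Q |u|^p\varphi V\psi\,d\xi\,dt$ (which appears with total coefficient $1$ on the right) into the same term on the left, and arrive at \eqref{apest}. The assumption $J_i(\varphi)<\infty$ for $i=1,2$ is exactly what is needed to ensure that the estimates produced by Young's inequality are finite and that the absorption is legitimate.

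The only subtle point is the bookkeeping of the $\psi$-weight: because the Kohn-Laplacian enters the differential operator with an additional factor $1/\psi$, the natural weight inside $J_2$ is $\psi^{-1/(p-1)}$ rather than $\psi$ itself, whereas $J_1$ carries the factor $\psi$. One must pick the Young decomposition so that the product of the two factors recovers the original integrand exactly, including the $\psi$-power. Beyond this accounting, the argument is a direct application of Young's inequality, so no genuine analytic obstacle arises.
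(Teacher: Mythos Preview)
Your proposal is correct and follows essentially the same route as the paper: start from \eqref{ws}, split the right-hand side via the triangle inequality into the time-derivative and spatial pieces, apply Young's inequality with exponents $p$ and $p/(p-1)$ using exactly the weighted factorizations you describe, and then absorb the two copies of $\tfrac{1}{2}\int_Q |u|^p\varphi V\psi\,d\xi\,dt$ into the left-hand side. The decompositions you wrote match \eqref{S2-L4.4} and \eqref{S3-L4.4} precisely, including the $\psi$-bookkeeping.
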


\subsection{Estimates of $J_i(\varphi)$}

\begin{lemma}\label{L4.5}
We have
\begin{equation}\label{est1-L.4.5}
\int_0^\infty 	\beta_T(t)\,dt \leq   T
\end{equation}
and
\begin{equation}
\label{est2-L.4.5} \int_0^\infty  \beta_T^{\frac{-1}{p-1}}\left|\beta_T^{(k)}\right|^{\frac{p}{p-1}}\,dt \leq  CT^{1-\frac{kp}{p-1}},
\end{equation}
where $\beta_T$ is the function defined by \eqref{betaT-f}.
\end{lemma}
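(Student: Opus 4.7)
The first estimate is immediate: since $0\le \vartheta\le 1$ and $\supp(\vartheta)\subset\subset(0,1)$, the definition $\beta_T(t)=\vartheta^\iota(t/T)$ gives $0\le\beta_T\le 1$ and $\supp(\beta_T)\subset (0,T)$, so $\int_0^\infty\beta_T(t)\,dt\le\int_0^T 1\,dt=T$.

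For the second estimate the plan is to reduce to a fixed integral on $(0,1)$ by scaling. By the chain rule applied $k$ times, $\beta_T^{(k)}(t)=T^{-k}(\vartheta^\iota)^{(k)}(t/T)$. Substituting $s=t/T$ in the integral yields
\begin{equation*}
\int_0^\infty\beta_T^{\frac{-1}{p-1}}\bigl|\beta_T^{(k)}\bigr|^{\frac{p}{p-1}}\,dt
= T^{1-\frac{kp}{p-1}}\int_0^1 \vartheta^{\frac{-\iota}{p-1}}(s)\,\bigl|(\vartheta^\iota)^{(k)}(s)\bigr|^{\frac{p}{p-1}}\,ds,
\end{equation*}
so it suffices to show the $s$-integral is finite with a constant independent of $T$.

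To bound the $s$-integral I would use the standard expansion of $(\vartheta^\iota)^{(k)}$ via the Leibniz/Fa\`a di Bruno formula: it is a finite linear combination of terms of the form $\vartheta^{\iota-j}\prod_{i}\bigl(\vartheta^{(m_i)}\bigr)^{n_i}$, with $\sum n_i=j\le k$ and $\sum m_in_i=k$. In particular each such term is bounded by $C\,\vartheta^{\iota-k}$ on $\supp(\vartheta)$, because $\vartheta$ and its derivatives are uniformly bounded there. Consequently
\begin{equation*}
\vartheta^{\frac{-\iota}{p-1}}\bigl|(\vartheta^\iota)^{(k)}\bigr|^{\frac{p}{p-1}}\le C\,\vartheta^{\,\iota-\frac{kp}{p-1}},
\end{equation*}
and choosing $\iota$ sufficiently large (namely $\iota\ge kp/(p-1)$, which is ensured by the hypothesis $\iota\gg 1$) makes the right-hand side bounded on $(0,1)$. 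Integrating over $(0,1)$ gives a finite constant $C$ depending only on $\vartheta,k,p,\iota$, which completes the second estimate.

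The only nontrivial point is the uniform bound on $\vartheta^{-\iota/(p-1)}|(\vartheta^\iota)^{(k)}|^{p/(p-1)}$ near the boundary of $\supp(\vartheta)$, where both numerator and denominator vanish; this is precisely where the large choice of $\iota$ is essential, and handling it cleanly is the main (routine) obstacle.
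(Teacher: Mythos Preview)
Your proof is correct and follows essentially the same approach as the paper: both arguments use $\beta_T^{(k)}(t)=T^{-k}(\vartheta^\iota)^{(k)}(t/T)$ together with the pointwise bound $|(\vartheta^\iota)^{(k)}|\le C\vartheta^{\iota-k}$, then exploit $\iota\ge kp/(p-1)$ to make $\vartheta^{\iota-kp/(p-1)}$ bounded on $(0,1)$. The only cosmetic difference is that you perform the substitution $s=t/T$ first and then bound the resulting fixed integral, whereas the paper bounds the integrand in the $t$-variable and reads off the scaling at the end.
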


\begin{proof}
By \eqref{v-ppts} (namely $0\leq \vartheta\leq 1$), we have
$$
\begin{aligned}
\int_0^\infty 	\beta_T(t)\,dt  &=\int_0^\infty 	\vartheta^\iota\left(\frac{t}{T}\right)\,dt \\
&= \int_0^{T} 		\vartheta^\iota\left(\frac{t}{T}\right)\,dt \\
&\leq T,
\end{aligned}
$$	
which proves \eqref{est1-L.4.5}. Furthermore, we have
\begin{equation}\label{S1-L4.5}
 \int_0^\infty  \beta_T^{\frac{-1}{p-1}}\left|\beta_T^{(k)}\right|^{\frac{p}{p-1}}\,dt= \int_0^{T}  \vartheta^{\frac{-\iota}{p-1}}\left(\frac{t}{T}\right)\left|\beta_T^{(k)}\right|^{\frac{p}{p-1}}\,dt.
\end{equation}
On the other hand, for all $0<t<T$, we have
$$
\begin{aligned}
\left|\beta_T^{(k)}(t)\right|& =\left|\left[\vartheta^\iota\left(\frac{t}{T}\right)\right]^{(k)}\right|\\
&\leq C T^{-k}\vartheta^{\iota-k}\left(\frac{t}{T}\right),
\end{aligned}
$$
which implies by \eqref{S1-L4.5} that
$$
\begin{aligned}
 \int_0^\infty  \beta_T^{\frac{-1}{p-1}}\left|\beta_T^{(k)}\right|^{\frac{p}{p-1}}\,dt&\leq C T^{\frac{-kp}{p-1}}	\int_0^{T}\vartheta^{\iota-\frac{kp}{p-1}}\left(\frac{t}{T}\right)\,dt\\
 &\leq C T^{1-\frac{kp}{p-1}},
\end{aligned}
$$
which proves \eqref{est2-L.4.5}.
\end{proof}

\begin{lemma}\label{L4.6}
Let $V^{\frac{-1}{p-1}}\in L^1_{\loc}(B_\HH\backslash\{0\},K\psi\,d\xi)$. Then
\begin{equation}\label{est-L4.6}
\int_{B_\HH} \gamma_RV^{\frac{-1}{p-1}}\psi\,d\xi \leq \int_{B_H\backslash \overline{B_\HH\left(0,\frac{1}{2R}\right)}}V^{\frac{-1}{p-1}}K\psi\,d\xi,
\end{equation}
where $\gamma_R$ is the function defined by \eqref{gamaR-f}.
\end{lemma}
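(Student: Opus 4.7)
The plan is essentially a direct pointwise comparison, exploiting the properties of the cutoff $\zeta$. First I would unwind the definition \eqref{gamaR-f} of $\gamma_R$ and recall from \eqref{ppts-zeta} that $\zeta$ vanishes on $[0,1/2]$, equals $1$ on $[1,\infty)$, and is nondecreasing, so $0\leq \zeta\leq 1$ on $[0,\infty)$. Raising to the power $\iota$ preserves both bounds: $0\leq \zeta^\iota\leq 1$, and $\zeta^\iota(R|\xi|_\HH)=0$ whenever $R|\xi|_\HH\leq 1/2$, that is, whenever $\xi\in \overline{B_\HH(0,1/(2R))}$.

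Next I would use these two facts together. The vanishing of $\zeta^\iota(R|\xi|_\HH)$ on $\overline{B_\HH(0,1/(2R))}$ shows that $\gamma_R\equiv 0$ on this set, hence
\begin{equation*}
\int_{B_\HH} \gamma_R V^{\frac{-1}{p-1}}\psi\,d\xi
= \int_{B_\HH\backslash \overline{B_\HH(0,1/(2R))}} K(\xi)\,\zeta^\iota(R|\xi|_\HH)\,V^{\frac{-1}{p-1}}\psi\,d\xi.
\end{equation*}
The bound $\zeta^\iota\leq 1$, combined with $K\geq 0$ (Lemma \ref{L4.1}(ii)), $V>0$ a.e., and $\psi\geq 0$, immediately yields the pointwise estimate $\gamma_R(\xi)\leq K(\xi)$ on the remaining annular region, and integrating gives the desired inequality \eqref{est-L4.6}.

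The only thing to remark is that the right-hand side is finite by the standing assumption $V^{\frac{-1}{p-1}}\in L^1_{\loc}(B_\HH\backslash\{0\},K\psi\,d\xi)$, since $\overline{B_\HH}\backslash\overline{B_\HH(0,1/(2R))}$ is a compact subset of $\overline{B_\HH}\backslash\{0\}$ (and $K$ is continuous up to $\partial B_\HH$ where it vanishes, by Lemma \ref{L4.1}(i) and (iv)). There is no real obstacle here; the lemma is a bookkeeping step whose role is simply to reduce integrals of $\gamma_R$ over $B_\HH$ to integrals of $K$ over an annular region whose inner radius shrinks to $0$ as $R\to\infty$, which is exactly the form needed to invoke hypothesis \eqref{cd1-bl} later in the nonexistence proof.
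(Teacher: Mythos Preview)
Your proof is correct and follows essentially the same approach as the paper: expand $\gamma_R$ via \eqref{gamaR-f}, use the support property of $\zeta$ from \eqref{ppts-zeta} to restrict the integral to $B_\HH\backslash \overline{B_\HH(0,1/(2R))}$, and then bound $\zeta^\iota\leq 1$. Your added remarks on nonnegativity and on finiteness of the right-hand side are extra justification but do not change the argument.
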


\begin{proof}
By \eqref{ppts-zeta} and \eqref{gamaR-f}, we have
$$
\begin{aligned}
\int_{B_\HH} \gamma_R V^{\frac{-1}{p-1}}\psi\,d\xi& =\int_{B_\HH}K(\xi) \zeta^\iota\left(R|\xi|_\HH\right)V^{\frac{-1}{p-1}}\psi\,d\xi \\
&=\int_{B_H\backslash \overline{B_\HH\left(0,\frac{1}{2R}\right)}}K(\xi)\zeta^\iota\left(R|\xi|_\HH\right)V^{\frac{-1}{p-1}}\psi\,d\xi\\
&\leq \int_{B_H\backslash \overline{B_\HH\left(0,\frac{1}{2R}\right)}}V^{\frac{-1}{p-1}}K\psi\,d\xi,
\end{aligned}
$$
which proves \eqref{est-L4.6}. 	
\end{proof}

\begin{lemma}\label{L4.7}
Let $V^{\frac{-1}{p-1}}\in L^1_{\loc}(B_\HH\backslash\{0\},K\psi\,d\xi)$. Then
\begin{equation}\label{est-L4.7}
\int_{B_\HH} \mu_RV^{\frac{-1}{p-1}}\psi\,d\xi \leq \int_{B_H\backslash \overline{B_\HH\left(0,\frac{1}{R}\right)}}V^{\frac{-1}{p-1}}K\psi\,d\xi
\end{equation}
where $\mu_R$ is the function defined by \eqref{muRR}.
\end{lemma}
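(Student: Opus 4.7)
The plan is to mimic the proof of Lemma \ref{L4.6}, replacing the cut-off $\zeta^\iota\!\left(R|\xi|_\HH\right)$ by $\ell^\iota\!\left(1+\frac{\ln|\xi|_\HH}{\ln R}\right)$ and tracking where this new cut-off vanishes. The two ingredients needed are the pointwise bound $\ell^\iota \leq 1$ and the identification of the support of $\ell^\iota\!\left(1+\frac{\ln|\xi|_\HH}{\ln R}\right)$ restricted to $B_\HH$.

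First I would analyze the argument $s(\xi):=1+\frac{\ln|\xi|_\HH}{\ln R}$ for $\xi \in B_\HH\setminus\{0\}$. Since $R\gg 1$ we have $\ln R>0$, and since $|\xi|_\HH<1$ we have $\ln|\xi|_\HH<0$. Thus $s(\xi)\le 0$ if and only if $|\xi|_\HH\le \frac{1}{R}$. By the definition \eqref{ell-f} of $\ell$, this gives $\ell^\iota(s(\xi))=0$ for all $\xi\in \overline{B_\HH(0,\frac{1}{R})}\setminus\{0\}$, and hence by \eqref{muRR},
\[
\mu_R(\xi)=0\quad\text{for all } \xi\in \overline{B_\HH\!\left(0,\tfrac{1}{R}\right)}\setminus\{0\}.
\]
Consequently the integration on the left-hand side of \eqref{est-L4.7} may be restricted to $B_\HH\setminus \overline{B_\HH(0,\frac{1}{R})}$.

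Next, since $0\le \ell\le 1$, we have $\ell^\iota\!\left(s(\xi)\right)\le 1$ everywhere, and therefore $\mu_R(\xi)\le K(\xi)$ for every $\xi\in \overline{B_\HH}\setminus\{0\}$ (using $K\ge 0$ from Lemma \ref{L4.1}(ii)). Combining this pointwise bound with the support information from the previous step and the hypothesis $V^{\frac{-1}{p-1}}\in L^1_{\loc}(B_\HH\setminus\{0\},K\psi\,d\xi)$ (which ensures the right-hand side is meaningful and finite), I would conclude
\[
\int_{B_\HH}\mu_R V^{\frac{-1}{p-1}}\psi\,d\xi
=\int_{B_\HH\setminus \overline{B_\HH\left(0,\frac{1}{R}\right)}}\mu_R V^{\frac{-1}{p-1}}\psi\,d\xi
\le \int_{B_\HH\setminus \overline{B_\HH\left(0,\frac{1}{R}\right)}}K V^{\frac{-1}{p-1}}\psi\,d\xi,
\]
which is exactly \eqref{est-L4.7}.

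There is no substantive obstacle: the whole argument is a one-line pointwise estimate plus a support computation, and the only subtle point (identifying the vanishing set of the logarithmic cut-off) is handled by a direct monotonicity check on $s(\xi)$. The proof is essentially a verbatim copy of Lemma \ref{L4.6} with the different geometry of the cut-off, which is why the inner radius $\frac{1}{2R}$ in \eqref{est-L4.6} is replaced here by $\frac{1}{R}$.
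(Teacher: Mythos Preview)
Your proposal is correct and follows essentially the same approach as the paper's proof: identify the vanishing set of the logarithmic cut-off $\ell^\iota\!\left(1+\frac{\ln|\xi|_\HH}{\ln R}\right)$ as $\overline{B_\HH(0,1/R)}\setminus\{0\}$, then bound $\ell^\iota\le 1$ pointwise. The paper presents this in a single three-line display without spelling out the computation of $s(\xi)$, but the content is identical.
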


\begin{proof}
By \eqref{ell-f} and \eqref{muRR}, we have	
$$
\begin{aligned}
\int_{B_\HH} \mu_RV^{\frac{-1}{p-1}}\psi\,d\xi &= \int_{B_\HH}K(\xi) \ell^\iota\left(1+\frac{\ln |\xi|_\HH}{\ln R}\right)V^{\frac{-1}{p-1}}\psi\,d\xi\\
&=\int_{B_H\backslash \overline{B_\HH\left(0,\frac{1}{R}\right)}}K(\xi) \ell^\iota\left(1+\frac{\ln |\xi|_\HH}{\ln R}\right)V^{\frac{-1}{p-1}}\psi\,d\xi\\
&\leq \int_{B_H\backslash \overline{B_\HH\left(0,\frac{1}{R}\right)}} V^{\frac{-1}{p-1}}K\psi\,d\xi,
	\end{aligned}
$$
which proves \eqref{est-L4.7}.
\end{proof}

\begin{lemma}\label{L4.8}
Let $V^{\frac{-1}{p-1}}\in L^1_{\loc}(B_\HH\backslash\{0\},K\psi\,d\xi)$. Then
\begin{equation}\label{est-L4.8}
\begin{aligned}
&\int_{B_\HH}	\gamma_R^{\frac{-1}{p-1}} \left|-\Delta_\HH \gamma_R+\frac{\lambda}{|\xi|_\HH^2}\psi\gamma_R\right|^{\frac{p}{p-1}}V^{\frac{-1}{p-1}}\psi^{\frac{-1}{p-1}}\,d\xi\\
&\leq CR^{\frac{2p}{p-1}}\int_{B_\HH\left(0,\frac{1}{R}\right)\backslash \overline{B_\HH\left(0,\frac{1}{2R}\right)}}	V^{\frac{-1}{p-1}}K \psi \,d\xi.
\end{aligned}
\end{equation}
\end{lemma}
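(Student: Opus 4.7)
The strategy is the standard Mitidieri--Pokhozhaev ``fake Leibniz'' computation, exploiting that $K$ lies in the kernel of the operator involved. Factor $\gamma_R = K \cdot h_R$, where $h_R(\xi) := \zeta^\iota(R|\xi|_\HH)$, and apply the Leibniz rule for the Kohn Laplacian:
$$\Delta_\HH(K h_R) = h_R \Delta_\HH K + K \Delta_\HH h_R + 2\,\nabla_\HH K \cdot \nabla_\HH h_R.$$
By Lemma \ref{L4.1}(iii), $-\Delta_\HH K + \frac{\lambda \psi}{|\xi|_\HH^2} K = 0$ on $B_\HH \setminus \{0\}$, so the $h_R$-coefficient cancels and
$$-\Delta_\HH \gamma_R + \frac{\lambda \psi}{|\xi|_\HH^2} \gamma_R = -K\, \Delta_\HH h_R - 2\,\nabla_\HH K \cdot \nabla_\HH h_R.$$

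Because $\zeta$ is constant (either $0$ or $1$) outside $[1/2,1]$, the function $h_R$ is locally constant outside the annulus $A_R := B_\HH(0,1/R)\setminus \overline{B_\HH(0,1/(2R))}$, so $\nabla_\HH h_R$ and $\Delta_\HH h_R$ vanish there. Hence the left-hand side of \eqref{est-L4.8} reduces to an integral over $A_R$, which already explains the integration region on the right-hand side.

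To estimate the integrand on $A_R$, I would use that both $K$ and $h_R$ are radial, apply \eqref{lap} and $|\nabla_\HH \rho|^2 = \psi$ to rewrite the previous display as $\psi$ times a scalar function of $\rho$ involving $F := \sigma_\lambda$ and $g(\rho) := \zeta^\iota(R\rho)$. Standard chain-rule estimates give $|g^{(j)}(\rho)| \leq CR^j \zeta^{\iota - j}(R\rho)$ for $j=1,2$; on $A_R$ one has $1/\rho \leq 2R$; and the explicit form of $\sigma_\lambda$ yields $\rho |F'(\rho)| \leq C F(\rho)$ uniformly on $A_R$. Combining these produces the pointwise bound
$$\left|-\Delta_\HH \gamma_R + \frac{\lambda \psi}{|\xi|_\HH^2} \gamma_R\right| \leq C R^2 \psi\, K(\xi)\, \zeta^{\iota - 2}(R|\xi|_\HH) \quad \text{on } A_R.$$

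Substituting this into the integrand of \eqref{est-L4.8} and using $\gamma_R^{-1/(p-1)} = K^{-1/(p-1)} \zeta^{-\iota/(p-1)}(R|\xi|_\HH)$, the powers of $K$ simplify to $K$, the powers of $\psi$ simplify to $\psi$, and the factors of $\zeta$ collapse to $\zeta^{[\iota(p-1) - 2p]/(p-1)}$. Choosing $\iota$ sufficiently large that $\iota(p-1) \geq 2p$ makes this last factor $\leq 1$, and integrating the resulting pointwise bound over $A_R$ yields the desired estimate with constant $C R^{2p/(p-1)}$. The main technical point is the uniform comparison $\rho |F'(\rho)| \leq C F(\rho)$ on the dyadic annulus $A_R$; this is elementary but requires separate verification in the two cases $\lambda > -(Q-2)^2/4$ (where $F = \rho^{\alpha^-} - \rho^{\alpha^+}$ is asymptotically $\rho^{\alpha^-}$ for small $\rho$) and $\lambda = -(Q-2)^2/4$ (where the logarithmic factor must be controlled on dyadic scales). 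Everything else is a routine Leibniz-plus-rescaling computation.
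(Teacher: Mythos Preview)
Your proposal is correct and follows essentially the same route as the paper: expand $-\Delta_\HH\gamma_R+\frac{\lambda\psi}{|\xi|_\HH^2}\gamma_R$ via the Leibniz rule, cancel the $K$-term using Lemma~\ref{L4.1}(iii), reduce to the annulus $A_R$ by the support of $\zeta'$, and then combine the pointwise bounds $|[\zeta^\iota(R\rho)]^{(j)}|\le CR^j\zeta^{\iota-j}$ and $|\sigma_\lambda'(\rho)|/\sigma_\lambda(\rho)\le CR$ on $A_R$ to obtain $CR^2\psi K\zeta^{\iota-2}$, after which the powers of $K$, $\psi$, and $\zeta$ collapse exactly as you describe. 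The only cosmetic difference is that the paper writes the Leibniz expansion directly in radial form via \eqref{lap} rather than first invoking the general horizontal-gradient product rule.
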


\begin{proof}
By \eqref{lap},  \eqref{K-f} and \eqref{gamaR-f}, for all $\xi\in B_\HH\backslash\{0\}$, we have (with $\rho=|\xi|_\HH$)
$$
\begin{aligned}
& -\Delta_\HH \gamma_R(\xi)+\frac{\lambda}{|\xi|_\HH^2}\psi\gamma_R(\xi)\\
&= -\zeta^\iota\left(R\rho\right)\Delta_\HH K(\xi)-K(\xi)\Delta_\HH \left[	\zeta^\iota\left(R\rho\right)\right]-2\psi  \sigma_\lambda'(\rho)\left[\zeta^\iota\left(R\rho\right)\right]'+\frac{\lambda}{\rho^2}\psi K(\xi)\zeta^\iota\left(R\rho\right)\\
&=\zeta^\iota\left(R\rho\right)\left(-\Delta_\HH K(\xi)+\frac{\lambda}{\rho^2}\psi K(\xi)\right)-\sigma_\lambda(\rho)\Delta_\HH \left[	\zeta^\iota\left(R\rho\right)\right]-2\psi  \sigma_\lambda'(\rho)\left[\zeta^\iota\left(R\rho\right)\right]',
\end{aligned}
$$	
which implies by Lemma \ref{L4.1} (iii) that
\begin{equation}\label{S1-L4.8}
-\Delta_\HH \gamma_R(\xi)+\frac{\lambda}{|\xi|_\HH^2}\psi\gamma_R(\xi)=	-\sigma_\lambda(\rho)\Delta_\HH \left[	\zeta^\iota\left(R\rho\right)\right]-2\psi  \sigma_\lambda'(\rho)\left[\zeta^\iota\left(R\rho\right)\right]'.
\end{equation}
Then, from  \eqref{ppts-zeta} and \eqref{S1-L4.8},  we deduce that
\begin{equation}\label{S2-L4.8}
\begin{aligned}
&\int_{B_\HH}	\gamma_R^{\frac{-1}{p-1}} \left|-\Delta_\HH \gamma_R+\frac{\lambda}{|\xi|_\HH^2}\psi\gamma_R\right|^{\frac{p}{p-1}}V^{\frac{-1}{p-1}}\psi^{\frac{-1}{p-1}}\,d\xi\\
&=\int_{B_\HH\left(0,\frac{1}{R}\right)\backslash \overline{B_\HH\left(0,\frac{1}{2R}\right)}}	\gamma_R^{\frac{-1}{p-1}} \left|-\Delta_\HH \gamma_R+\frac{\lambda}{|\xi|_\HH^2}\psi\gamma_R\right|^{\frac{p}{p-1}}V^{\frac{-1}{p-1}}\psi^{\frac{-1}{p-1}}\,d\xi.
\end{aligned}	
\end{equation}
Furthermore, by \eqref{lap}, \eqref{K-f} and \eqref{S1-L4.8}, for all $\xi\in B_\HH\left(0,\frac{1}{R}\right)\backslash \overline{B_\HH\left(0,\frac{1}{2R}\right)}$, we obtain
$$
\begin{aligned}
\left|-\Delta_\HH \gamma_R(\xi)+\frac{\lambda}{|\xi|_\HH^2}\psi\gamma_R(\xi)\right|
&\leq K(\xi) \big|\Delta_\HH \left[\zeta^\iota\left(R\rho\right)\right]\big|+C \psi K(\xi) \frac{|\sigma_\lambda'(\rho)|}{\sigma_\lambda(\rho)}\big|\left[\zeta^\iota\left(R\rho\right)\right]'\big| \\
&\leq K(\xi)\left(\big|\Delta_\HH \left[\zeta^\iota\left(R\rho\right)\right]\big|+C R\psi\big|\left[\zeta^\iota\left(R\rho\right)\right]'\big|\right)\\
&\leq CR^2K(\xi)\psi \zeta^{\iota-2}\left(R\rho\right),
\end{aligned}
$$
which implies (recall that $\iota\gg 1$ and $0\leq \zeta\leq 1$) that
\begin{equation}\label{S3-L4.8}
\begin{aligned}
\gamma_R^{\frac{-1}{p-1}} \left|-\Delta_\HH \gamma_R+\frac{\lambda}{|\xi|_\HH^2}\psi\gamma_R\right|^{\frac{p}{p-1}}V^{\frac{-1}{p-1}}\psi^{\frac{-1}{p-1}} &\leq CR^{\frac{2p}{p-1}}V^{\frac{-1}{p-1}}K\psi \zeta^{\iota-\frac{2p}{p-1}}\left(R\rho\right)\\
&\leq CR^{\frac{2p}{p-1}}V^{\frac{-1}{p-1}}K(\xi)\psi.
\end{aligned}
\end{equation}
Finally, \eqref{est-L4.8} follows from \eqref{S2-L4.8} and \eqref{S3-L4.8}.
\end{proof}

\begin{lemma}\label{L4.9}
Let $V^{\frac{-1}{p-1}}\in L^1_{\loc}(B_\HH\backslash\{0\},K\psi\,d\xi)$. Then	
\begin{equation}\label{est-L4.9}
\begin{aligned}
&\int_{B_\HH}	\mu_R^{\frac{-1}{p-1}} \left|-\Delta_\HH \mu_R+\frac{\lambda}{|\xi|_\HH^2}\psi\mu_R\right|^{\frac{p}{p-1}}V^{\frac{-1}{p-1}}\psi^{\frac{-1}{p-1}}\,d\xi\\
&\leq C(\ln R)^{\frac{-p}{p-1}}\int_{B_\HH\left(0,\frac{1}{\sqrt R}\right)\backslash \overline{B_\HH\left(0,\frac{1}{R}\right)}}	V^{\frac{-1}{p-1}}K \psi |\xi|_\HH^{\frac{-2p}{p-1}}\,d\xi.
\end{aligned}
\end{equation}
\end{lemma}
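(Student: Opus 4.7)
The plan is to mimic the proof of Lemma \ref{L4.8}, replacing the polynomial cutoff $\zeta^\iota(R\rho)$ with the logarithmic cutoff $g(\rho):=\ell^\iota\left(1+\frac{\ln\rho}{\ln R}\right)$, and carefully tracking the $\ln R$ factors that arise from differentiating $\ln\rho/\ln R$.

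First I would expand $\mu_R=\sigma_\lambda(\rho)g(\rho)$ using the product rule for $\Delta_\HH$ applied to a product of radial functions (via \eqref{lap} and the identity $|\nabla_\HH\rho|^2=\psi$). This gives
$$
-\Delta_\HH\mu_R+\frac{\lambda}{|\xi|_\HH^2}\psi\mu_R
=g(\rho)\left(-\Delta_\HH K+\frac{\lambda}{|\xi|_\HH^2}\psi K\right)-K(\xi)\Delta_\HH g(\rho)-2\psi\,\sigma_\lambda'(\rho)\,g'(\rho),
$$
and the first term vanishes by Lemma \ref{L4.1}(iii), exactly as in Lemma \ref{L4.8}.

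Next I would localize the support: by \eqref{ell-f} the function $g$ is identically $0$ for $\rho\le 1/R$ and identically $1$ for $\rho\ge 1/\sqrt R$, so only the annulus $B_\HH(0,1/\sqrt R)\setminus\overline{B_\HH(0,1/R)}$ contributes. In that annulus I would estimate $g',g''$. Setting $h(\rho):=1+\ln\rho/\ln R$ so that $h'(\rho)=1/(\rho\ln R)$, the chain rule yields
$$
|g'(\rho)|\le \frac{C\,\ell^{\iota-1}(h)}{\rho\ln R},\qquad
|g''(\rho)|\le \frac{C\,\ell^{\iota-2}(h)}{\rho^{2}\ln R}
$$
(here the $(\ln R)^{-2}$ term from $(g')'$ is absorbed by the $(\ln R)^{-1}$ term since $\ln R\gg 1$). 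Combined with the radial formula \eqref{lap}, this gives $|\Delta_\HH g(\rho)|\le C\psi\ell^{\iota-2}(h)/(\rho^2\ln R)$. Using the elementary bound $|\sigma_\lambda'(\rho)|\le C\sigma_\lambda(\rho)/\rho=CK(\xi)/\rho$ valid for $\rho$ small (which one checks in both the regular case from $\sigma_\lambda(\rho)=\rho^{\alpha^-}-\rho^{\alpha^+}$ and the logarithmic case $\sigma_\lambda(\rho)=-\rho^{\alpha^-}\ln\rho$), both remaining terms are controlled:
$$
\left|-\Delta_\HH\mu_R+\frac{\lambda}{|\xi|_\HH^2}\psi\mu_R\right|\le \frac{C\,\psi K(\xi)\,\ell^{\iota-2}(h)}{\rho^{2}\ln R}.
$$

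Finally I would raise this bound to the power $p/(p-1)$ and multiply by $\mu_R^{-1/(p-1)}V^{-1/(p-1)}\psi^{-1/(p-1)}$. Since $\mu_R^{-1/(p-1)}=K^{-1/(p-1)}\ell^{-\iota/(p-1)}(h)$, the powers of $K$ and $\psi$ combine to give a single $K\psi$, while the remaining power of $\ell$ is $\ell^{(\iota(p-1)-2p)/(p-1)}(h)$, which for $\iota$ chosen large enough is a nonnegative power of $\ell\in[0,1]$ and can be dropped. This leaves exactly $CV^{-1/(p-1)}K\psi\,\rho^{-2p/(p-1)}(\ln R)^{-p/(p-1)}$, and integrating over the annulus above yields \eqref{est-L4.9}. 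The main delicate point is the bookkeeping of the $\ln R$ factors in $g''$: one must note that although a naive bound gives $(\ln R)^{-2}$ from the chain rule, the presence of the $\rho^{-2}/\ln R$ term coming from differentiating $1/(\rho\ln R)$ is larger, so the effective scaling is $1/\ln R$, which is precisely what produces the $(\ln R)^{-p/(p-1)}$ in the final estimate.
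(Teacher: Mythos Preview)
Your proposal is correct and follows essentially the same route as the paper's proof: product expansion of $\mu_R=K\cdot\ell^\iota(h)$, cancellation of the $K$-part via Lemma~\ref{L4.1}(iii), localization to the annulus $\{1/R<\rho<1/\sqrt R\}$, pointwise derivative bounds on $\ell^\iota(h)$ giving the factor $\psi\,\ell^{\iota-2}/(\rho^2\ln R)$, and then the Young-type bookkeeping of powers of $K$, $\psi$ and $\ell$. Your use of the sharp ratio bound $|\sigma_\lambda'(\rho)|/\sigma_\lambda(\rho)\le C/\rho$ is in fact slightly cleaner than the paper, which at the corresponding step writes $|\sigma_\lambda'|/\sigma_\lambda\le CR$ (copied from the proof of Lemma~\ref{L4.8}); that coarser bound is not quite enough on the wider annulus $[1/R,1/\sqrt R]$, whereas your $C/\rho$ gives exactly the required $1/(\rho^2\ln R)$ scaling.
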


\begin{proof}
Following the proof of Lemma \ref{L4.8}, 	for all $\xi\in B_\HH\backslash\{0\}$, we have
\begin{equation}\label{S1-L4.9}
\begin{aligned}
&-\Delta_\HH \mu_R(\xi)+\frac{\lambda}{|\xi|_\HH^2}\psi\mu_R(\xi)\\
&=	-\sigma_\lambda(\rho)\Delta_\HH \left[\ell^\iota\left(1+\frac{\ln \rho}{\ln R}\right)\right]-2\psi  \sigma_\lambda'(\rho)\left[\ell^\iota\left(1+\frac{\ln \rho}{\ln R}\right)\right]'.
\end{aligned}
\end{equation}
Then, from  \eqref{ell-f} and \eqref{S1-L4.9},  we deduce that
\begin{equation}\label{S2-L4.9}
\begin{aligned}
&\int_{B_\HH}	\mu_R^{\frac{-1}{p-1}} \left|-\Delta_\HH \mu_R+\frac{\lambda}{|\xi|_\HH^2}\psi\mu_R\right|^{\frac{p}{p-1}}V^{\frac{-1}{p-1}}\psi^{\frac{-1}{p-1}}\,d\xi\\
&=\int_{B_\HH\left(0,\frac{1}{\sqrt R}\right)\backslash \overline{B_\HH\left(0,\frac{1}{R}\right)}}	\mu_R^{\frac{-1}{p-1}} \left|-\Delta_\HH \mu_R+\frac{\lambda}{|\xi|_\HH^2}\psi\mu_R\right|^{\frac{p}{p-1}}V^{\frac{-1}{p-1}}\psi^{\frac{-1}{p-1}}\,d\xi.
\end{aligned}	
\end{equation}
Furthermore, by \eqref{lap}, \eqref{K-f} and \eqref{S1-L4.9}, for all $\xi\in B_\HH\left(0,\frac{1}{\sqrt R}\right)\backslash \overline{B_\HH\left(0,\frac{1}{R}\right)}$, we obtain
$$
\begin{aligned}
&\left|-\Delta_\HH \mu_R(\xi)+\frac{\lambda}{|\xi|_\HH^2}\psi\mu_R(\xi)\right|\\
&\leq K(\xi) \left|\Delta_\HH \left[\ell^\iota\left(1+\frac{\ln \rho}{\ln R}\right)\right]\right|+C \psi K(\xi) \frac{|\sigma_\lambda'(\rho)|}{\sigma_\lambda(\rho)}\left|\left[\ell^\iota\left(1+\frac{\ln \rho}{\ln R}\right)\right]'\right| \\
&\leq K(\xi)\left(\left|\Delta_\HH \left[\ell^\iota\left(1+\frac{\ln \rho}{\ln R}\right)\right]\right|+C R\psi\left|\left[\ell^\iota\left(1+\frac{\ln \rho}{\ln R}\right)\right]'\right|\right)\\
&\leq C\frac{1}{\rho^2\ln R} K(\xi)\psi \ell^{\iota-2}\left(1+\frac{\ln \rho}{\ln R}\right),
\end{aligned}
$$
which implies (recall that  $0\leq \ell\leq 1$) that
\begin{equation}\label{S3-L4.9}
\begin{aligned}
&\mu_R^{\frac{-1}{p-1}} \left|-\Delta_\HH \mu_R+\frac{\lambda}{|\xi|_\HH^2}\psi\mu_R\right|^{\frac{p}{p-1}}V^{\frac{-1}{p-1}}\psi^{\frac{-1}{p-1}}\\
 &\leq C(\ln R)^{\frac{-p}{p-1}}V^{\frac{-1}{p-1}}K\psi \rho^{\frac{-2p}{p-1}}\ell^{\iota-\frac{2p}{p-1}}\left(1+\frac{\ln \rho}{\ln R}\right)\\
&\leq C(\ln R)^{\frac{-p}{p-1}}V^{\frac{-1}{p-1}}K\psi \rho^{\frac{-2p}{p-1}}.
\end{aligned}
\end{equation}
Finally, \eqref{est-L4.9} follows from \eqref{S2-L4.9} and \eqref{S3-L4.9}.
\end{proof}

We now use \eqref{J1}, \eqref{J2}, Lemmas \ref{L4.5}, \ref{L4.6} and \ref{L4.8} to obtain the following estimates.

\begin{lemma}\label{L4.10}
Let $V^{\frac{-1}{p-1}}\in L^1_{\loc}(B_\HH\backslash\{0\},K\psi\,d\xi)$. Then	
$$
J_1(\varphi)\leq C T^{1-\frac{kp}{p-1}}\int_{B_H\backslash \overline{B_\HH\left(0,\frac{1}{2R}\right)}}V^{\frac{-1}{p-1}}K\psi\,d\xi
$$
and
$$
J_2(\varphi)\leq CT R^{\frac{2p}{p-1}}\int_{B_\HH\left(0,\frac{1}{R}\right)\backslash \overline{B_\HH\left(0,\frac{1}{2R}\right)}}	V^{\frac{-1}{p-1}}K \psi \,d\xi,
$$
where $\varphi$ is defined by \eqref{testf1}.
\end{lemma}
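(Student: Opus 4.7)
The plan is to exploit the tensor-product structure $\varphi(t,\xi)=\beta_T(t)\gamma_R(\xi)$ to separate each of $J_1(\varphi)$ and $J_2(\varphi)$ into a product of a pure time integral and a pure spatial integral via Fubini, and then invoke the one-dimensional time estimates from Lemma \ref{L4.5} together with the already-established spatial estimates from Lemmas \ref{L4.6} and \ref{L4.8}. The only algebraic identity needed is
$$-\frac{1}{p-1}+\frac{p}{p-1}=1,$$
which collapses one factor in each integrand to an unweighted power one.

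For $J_1$, note that $\partial_t^k\varphi=\beta_T^{(k)}(t)\gamma_R(\xi)$, so the integrand in \eqref{J1} factors as
$$\varphi^{-\frac{1}{p-1}}\bigl|\partial_t^k\varphi\bigr|^{\frac{p}{p-1}}V^{-\frac{1}{p-1}}\psi=\beta_T^{-\frac{1}{p-1}}\bigl|\beta_T^{(k)}\bigr|^{\frac{p}{p-1}}\cdot\gamma_R^{-\frac{1}{p-1}+\frac{p}{p-1}}\,V^{-\frac{1}{p-1}}\psi=\beta_T^{-\frac{1}{p-1}}\bigl|\beta_T^{(k)}\bigr|^{\frac{p}{p-1}}\cdot\gamma_R V^{-\frac{1}{p-1}}\psi.$$
By Fubini,
$$J_1(\varphi)=\left(\int_0^\infty\beta_T^{-\frac{1}{p-1}}\bigl|\beta_T^{(k)}\bigr|^{\frac{p}{p-1}}\,dt\right)\left(\int_{B_\HH}\gamma_R V^{-\frac{1}{p-1}}\psi\,d\xi\right).$$
The first factor is $\leq C\,T^{1-\frac{kp}{p-1}}$ by \eqref{est2-L.4.5}, and the second is $\leq\int_{B_\HH\setminus\overline{B_\HH(0,1/(2R))}}V^{-\frac{1}{p-1}}K\psi\,d\xi$ by Lemma \ref{L4.6}; multiplying yields the first inequality.

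For $J_2$, since $\beta_T$ is $\xi$-independent, we have $-\Delta_\HH\varphi+\tfrac{\lambda}{|\xi|_\HH^2}\psi\varphi=\beta_T(t)\bigl(-\Delta_\HH\gamma_R+\tfrac{\lambda}{|\xi|_\HH^2}\psi\gamma_R\bigr)$. The same exponent identity applied to $\beta_T$ now collapses the time factor to $\beta_T$ itself, and Fubini gives
$$J_2(\varphi)=\left(\int_0^\infty\beta_T(t)\,dt\right)\left(\int_{B_\HH}\gamma_R^{-\frac{1}{p-1}}\left|-\Delta_\HH\gamma_R+\tfrac{\lambda}{|\xi|_\HH^2}\psi\gamma_R\right|^{\frac{p}{p-1}}V^{-\frac{1}{p-1}}\psi^{-\frac{1}{p-1}}\,d\xi\right).$$
The first factor is $\leq T$ by \eqref{est1-L.4.5}, the second is $\leq CR^{\frac{2p}{p-1}}\int_{B_\HH(0,1/R)\setminus\overline{B_\HH(0,1/(2R))}}V^{-\frac{1}{p-1}}K\psi\,d\xi$ by Lemma \ref{L4.8}, and multiplying gives the claim.

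There is essentially no obstacle: the nontrivial work has already been done in Lemmas \ref{L4.5}, \ref{L4.6} and \ref{L4.8}, and the present lemma is just the clean bookkeeping step that assembles them. The only point requiring minor care is that the fractional powers $\varphi^{-1/(p-1)}$ make sense on $\operatorname{supp}\varphi$, which follows from $\beta_T\geq0$ (by \eqref{v-ppts}) and $\gamma_R\geq0$ (by Lemma \ref{L4.1}(ii)), together with the fact that on the support of the integrands the factors $\beta_T$ and $\gamma_R$ appearing in the denominators are strictly positive wherever the numerators are nonzero, so no division-by-zero issue arises.
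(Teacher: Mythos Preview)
Your proof is correct and follows exactly the approach the paper indicates: it simply records how the product structure $\varphi=\beta_T\gamma_R$ splits $J_1(\varphi)$ and $J_2(\varphi)$ into time and space factors, then invokes Lemmas~\ref{L4.5}, \ref{L4.6} and \ref{L4.8}. The paper itself does not spell out the details and merely says the lemma follows from \eqref{J1}, \eqref{J2} and those three lemmas, so your write-up is in fact more explicit than the original.
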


Similarly, from \eqref{J1}, \eqref{J2}, Lemmas \ref{L4.5}, \ref{L4.7} and  \ref{L4.9}, we deduce the following estimates.

\begin{lemma}\label{L4.11}
Let $V^{\frac{-1}{p-1}}\in L^1_{\loc}(B_\HH\backslash\{0\},K\psi\,d\xi)$. Then
$$
J_1(\varphi)\leq C T^{1-\frac{kp}{p-1}}\int_{B_H\backslash \overline{B_\HH\left(0,\frac{1}{R}\right)}}V^{\frac{-1}{p-1}}K\psi\,d\xi
$$	
and
$$
J_2(\varphi)\leq CT(\ln R)^{\frac{-p}{p-1}}\int_{B_\HH\left(0,\frac{1}{\sqrt R}\right)\backslash \overline{B_\HH\left(0,\frac{1}{R}\right)}}	V^{\frac{-1}{p-1}}K \psi |\xi|_\HH^{\frac{-2p}{p-1}}\,d\xi,
$$
where $\varphi$ is defined by \eqref{testf2}.
\end{lemma}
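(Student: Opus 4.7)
The plan is to exploit the product structure $\varphi(t,\xi)=\beta_T(t)\mu_R(\xi)$ to split both $J_1(\varphi)$ and $J_2(\varphi)$ into a product of a time integral (depending only on $\beta_T$) and a space integral (depending only on $\mu_R$), after which the estimates follow by quoting Lemmas \ref{L4.5}, \ref{L4.7}, \ref{L4.9}. This mirrors exactly the pattern that would be used to derive the analogous estimates for the test function \eqref{testf1} via Lemma \ref{L4.10}, the only difference being that the spatial cutoff $\zeta^\iota(R\rho)$ is replaced by the logarithmic cutoff $\ell^\iota\bigl(1+\tfrac{\ln\rho}{\ln R}\bigr)$.

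For $J_1(\varphi)$, first I observe that $\partial_t^k\varphi=\beta_T^{(k)}(t)\mu_R(\xi)$, so the integrand in \eqref{J1} becomes
\begin{equation*}
\varphi^{\frac{-1}{p-1}}\left|\frac{\partial^k\varphi}{\partial t^k}\right|^{\frac{p}{p-1}}V^{\frac{-1}{p-1}}\psi
=\bigl(\beta_T^{\frac{-1}{p-1}}|\beta_T^{(k)}|^{\frac{p}{p-1}}\bigr)\cdot\bigl(\mu_R V^{\frac{-1}{p-1}}\psi\bigr),
\end{equation*}
since the $\mu_R$ exponents $-\tfrac{1}{p-1}+\tfrac{p}{p-1}=1$ combine. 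Fubini then yields the factorization $J_1(\varphi)=\bigl(\int_0^\infty\beta_T^{-1/(p-1)}|\beta_T^{(k)}|^{p/(p-1)}\,dt\bigr)\cdot\bigl(\int_{B_\HH}\mu_R V^{-1/(p-1)}\psi\,d\xi\bigr)$. The first factor is at most $CT^{1-\frac{kp}{p-1}}$ by \eqref{est2-L.4.5}, and the second is controlled by the right-hand side of \eqref{est-L4.7}; multiplying gives the claimed bound on $J_1$.

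For $J_2(\varphi)$, the operator $-\Delta_\HH+\tfrac{\lambda}{|\xi|_\HH^2}\psi$ acts only on $\xi$, so
\begin{equation*}
-\Delta_\HH\varphi+\frac{\lambda}{|\xi|_\HH^2}\psi\varphi=\beta_T(t)\left(-\Delta_\HH\mu_R+\frac{\lambda}{|\xi|_\HH^2}\psi\mu_R\right),
\end{equation*}
and the integrand in \eqref{J2} separates as $\beta_T(t)\cdot\mu_R^{-1/(p-1)}\bigl|{-}\Delta_\HH\mu_R+\tfrac{\lambda}{|\xi|_\HH^2}\psi\mu_R\bigr|^{p/(p-1)}V^{-1/(p-1)}\psi^{-1/(p-1)}$, again because the $\beta_T$ exponents telescope to $1$. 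Then $J_2(\varphi)=\bigl(\int_0^\infty\beta_T\,dt\bigr)$ times the spatial integral appearing on the left of \eqref{est-L4.9}. Applying \eqref{est1-L.4.5} to the time factor and Lemma \ref{L4.9} to the space factor produces precisely the stated bound.

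There is no real obstacle here since all nontrivial analysis — the pointwise bound on $-\Delta_\HH\mu_R+\tfrac{\lambda}{|\xi|_\HH^2}\psi\mu_R$ using the fact that $K$ itself is annihilated by $-\tfrac{1}{\psi}\Delta_\HH+\tfrac{\lambda}{|\xi|_\HH^2}$ (Lemma \ref{L4.1}(iii)), together with the localization of the support to $B_\HH(0,1/\sqrt R)\setminus\overline{B_\HH(0,1/R)}$ — is already packaged inside Lemmas \ref{L4.7} and \ref{L4.9}. The only care needed is to verify the algebraic simplification of exponents in each integrand so that the tensor structure $\beta_T(t)\mu_R(\xi)$ factors cleanly under Fubini; beyond that, the lemma is a direct assembly of the previously proved estimates.
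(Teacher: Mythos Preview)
Your proposal is correct and follows exactly the paper's approach: the paper itself does not spell out a proof of Lemma~\ref{L4.11} but simply states that it is deduced from \eqref{J1}, \eqref{J2} and Lemmas~\ref{L4.5}, \ref{L4.7}, \ref{L4.9}, which is precisely the factorization-via-Fubini argument you have written out.
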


\section{Proofs of the main results}\label{sec5}
In this section, we provide the proofs of Theorems \ref{T1} and \ref{T2}. 

\subsection{Proof of Theorem \ref{T1}}
Suppose that $u$ is a weak solution to  \eqref{P}--\eqref{BC}. For $\iota,T,R\gg 1$, let $\varphi$ be the function defined by \eqref{testf1}. By Lemma \ref{L4.2}, we know that $\varphi$ is an admissible test function. Furthermore, by Lemma \ref{L4.10}, we have $J_i(\varphi)<\infty$, $i=1,2$. Then, making use of Lemma \ref{L4.4}, we obtain
\begin{equation}\label{S1-T1}
-\int_{\Gamma} f A\nabla_{\RR^{2N+1}}\varphi\cdot n\,{{dH_{2N}}} \,dt
\leq C \sum_{i=1}^2 J_i(\varphi).
\end{equation}
On the other hand, by \eqref{testf1}, we have
$$
\begin{aligned}
-\int_{\Gamma} f A\nabla_{\RR^{2N+1}}\varphi\cdot n\,{{dH_{2N}}} \,dt&=-\left(\int_0^T \vartheta^\iota\left(\frac{t}{T}\right)\,dt\right)\left(\int_{\partial B_\HH } f A\nabla_{\RR^{2N+1}}\gamma_R\cdot n\,{{dH_{2N}}}\right)\\
&=-CT  \int_{\partial B_\HH } f A\nabla_{\RR^{2N+1}}K\cdot n\,{{dH_{2N}}},
\end{aligned}
$$
which implies by \eqref{nder} that 
\begin{equation}\label{S2-T1}
-\int_{\Gamma} f A\nabla_{\RR^{2N+1}}\varphi\cdot n\,{{dH_{2N}}} \,dt=CT 	\int_{\partial B_\HH}f\frac{\psi}{|\nabla_{\RR^{2N+1}}|\xi|_\HH|}\,{{dH_{2N}}}.
\end{equation}
Next, it follows from Lemma \ref{L4.10}, \eqref{S1-T1} and \eqref{S2-T1} that 
\begin{equation}\label{S3-T1}
\begin{aligned}
&\int_{\partial B_\HH}f\frac{\psi}{|\nabla_{\RR^{2N+1}}|\xi|_\HH|}\,{{dH_{2N}}}\\
&\leq C\left(T^{-\frac{kp}{p-1}}\int_{B_H\backslash \overline{B_\HH\left(0,\frac{1}{2R}\right)}}V^{\frac{-1}{p-1}}K\psi\,d\xi+ R^{\frac{2p}{p-1}}\int_{B_\HH\left(0,\frac{1}{R}\right)\backslash \overline{B_\HH\left(0,\frac{1}{2R}\right)}}	V^{\frac{-1}{p-1}}K \psi \,d\xi\right).
\end{aligned}
\end{equation}
Let 
$$
\eta(R):=\int_{B_H\backslash \overline{B_\HH\left(0,\frac{1}{2R}\right)}}V^{\frac{-1}{p-1}}K\psi\,d\xi,\quad R\gg 1. 
$$
Remark that $\eta$ is a nondecreasing function. Then  we have two possible cases.\\
Case 1: If 
$$
\lim_{R\to \infty}\eta(R)<\infty, 
$$
then taking $T=R$ in \eqref{S3-T1}, we obtain
$$
\int_{\partial B_\HH}f\frac{\psi}{|\nabla_{\RR^{2N+1}}|\xi|_\HH|}\,{{dH_{2N}}}
\leq C\left(R^{-\frac{kp}{p-1}}+ R^{\frac{2p}{p-1}}\int_{B_\HH\left(0,\frac{1}{R}\right)\backslash \overline{B_\HH\left(0,\frac{1}{2R}\right)}}	V^{\frac{-1}{p-1}}K \psi \,d\xi\right).
$$ 
Passing to the infimum limit as $R\to \infty$  in the above inequality, we obtain by \eqref{cd1-bl} that 
$$
\int_{\partial B_\HH}f\frac{\psi}{|\nabla_{\RR^{2N+1}}|\xi|_\HH|}\,{{dH_{2N}}}\leq 0,
$$
which contradicts the fact that $f\in L^{1,+}(\partial B_\HH)$.\\
Case 2: If 
\begin{equation}\label{eta-lim}
\lim_{R\to \infty}\eta(R)=+\infty, 
\end{equation}
then taking $T=[\eta(R)]^\theta$, where 
\begin{equation}\label{thet}
\theta>\frac{p-1}{kp},
\end{equation}
\eqref{S3-T1} reduces to  
$$
\int_{\partial B_\HH}f\frac{\psi}{|\nabla_{\RR^{2N+1}}|\xi|_\HH|}\,{
{dH_{2N}}}
\leq C\left([\eta(R)]^{1-\frac{\theta k p}{p-1}}+ R^{\frac{2p}{p-1}}\int_{B_\HH\left(0,\frac{1}{R}\right)\backslash \overline{B_\HH\left(0,\frac{1}{2R}\right)}}	V^{\frac{-1}{p-1}}K \psi \,d\xi\right).
$$
Hence, passing to the infimum limit as $R\to \infty$  in the above inequality, we reach by \eqref{cd1-bl}, \eqref{eta-lim} and \eqref{thet} a contradiction with  $f\in L^{1,+}(\partial B_\HH)$.  This completes the proof of Theorem \ref{T1}. \hfill $\square$

\subsection{Proof of Theorem \ref{T2}}
(I) We first consider the case when
\begin{equation}\label{I.case1}
\lambda \geq -\left(\frac{Q-2}{2}\right)^2\quad\mbox{and}\quad     (Q-2+\alpha^-)p>Q+a+\alpha^-.
\end{equation}
By \eqref{K-f}, one has
$$
K(\xi)\leq C |\xi|_\HH^{\alpha^-}\ln R,\quad \xi\in B_\HH\left(0,\frac{1}{R}\right)\backslash \overline{B_\HH\left(0,\frac{1}{2R}\right)},
$$
which implies by \eqref{2pt1} and \eqref{V-f}  that 
$$
\begin{aligned}
\int_{B_\HH\left(0,\frac{1}{R}\right)\backslash \overline{B_\HH\left(0,\frac{1}{2R}\right)}}	V^{\frac{-1}{p-1}}K \psi \,d\xi &\leq C \ln R	\int_{B_\HH\left(0,\frac{1}{R}\right)\backslash \overline{B_\HH\left(0,\frac{1}{2R}\right)}}|\xi|_\HH^{\alpha^--\frac{a}{p-1}}\psi\,d\xi\\
&= C \ln R \int_{\frac{1}{2R}}^{\frac{1}{R}}\rho^{\alpha^--\frac{a}{p-1}+Q-1}\,d\rho\\
&\leq C R^{\frac{a}{p-1}-Q-\alpha^-} \ln R,
\end{aligned}
$$
which yields
\begin{equation}\label{S1-T2}
R^{\frac{2p}{p-1}}\int_{B_\HH\left(0,\frac{1}{R}\right)\backslash \overline{B_\HH\left(0,\frac{1}{2R}\right)}}	V^{\frac{-1}{p-1}}K \psi \,d\xi\leq C R^{\frac{a+2p}{p-1}-Q-\alpha^-} \ln R.
\end{equation}
Observe that due to \eqref{I.case1}, one has
$$
\frac{a+2p}{p-1}-Q-\alpha^-<0.
$$
Hence, passing to the limit as $R\to \infty$ in \eqref{S1-T2}, we obtain 
$$
\lim_{R\to \infty}R^{\frac{2p}{p-1}}\int_{B_\HH\left(0,\frac{1}{R}\right)\backslash \overline{B_\HH\left(0,\frac{1}{2R}\right)}}	V^{\frac{-1}{p-1}}K \psi \,d\xi=0.
$$
Hence, from Theorem \ref{T1}, we deduce that \eqref{P}--\eqref{BC} has no weak solution. 

We next consider the critical case 
\begin{equation}\label{I.case2}
\lambda > -\left(\frac{Q-2}{2}\right)^2\quad\mbox{and}\quad  (Q-2+\alpha^-)p=Q+a+\alpha^-. 	
\end{equation}
In this case, as in the proof of Theorem \ref{T1}, we argue by contradiction by supposing that 
$u$ is a weak solution to  \eqref{P}--\eqref{BC}. For $\iota,T,R\gg 1$, let $\varphi$ be the function defined by \eqref{testf2}. By Lemma \ref{L4.3}, we know that $\varphi$ is an admissible test function. Furthermore, by Lemma \ref{L4.11}, we have $J_i(\varphi)<\infty$, $i=1,2$. Then, from Lemma \ref{L4.4}, we obtain \eqref{S1-T1}. Proceeding as in the proof of Theorem \ref{T1}, by \eqref{nder}, \eqref{testf2} and \eqref{jalh}, we obtain \eqref{S2-T1}. Next, by  \eqref{S1-T1},  \eqref{S2-T1} and Lemma \ref{L4.11}, we get 
\begin{eqnarray}\label{HHH}
C\int_{\partial B_\HH}f\frac{\psi}{|\nabla_{\RR^{2N+1}}|\xi|_\HH|}\,{{dH_{2N}}}
&\leq &  T^{-\frac{kp}{p-1}}\int_{B_H\backslash \overline{B_\HH\left(0,\frac{1}{R}\right)}}V^{\frac{-1}{p-1}}K\psi\,d\xi\\
\nonumber && + (\ln R)^{\frac{-p}{p-1}}\int_{B_\HH\left(0,\frac{1}{\sqrt R}\right)\backslash \overline{B_\HH\left(0,\frac{1}{R}\right)}}	V^{\frac{-1}{p-1}}K |\xi|_\HH^{\frac{-2p}{p-1}}\psi \,d\xi.
\end{eqnarray}
Furthermore, by \eqref{lap}, \eqref{K-f} and \eqref{I.case2}, we have
$$
\begin{aligned}
\int_{B_\HH\left(0,\frac{1}{\sqrt R}\right)\backslash \overline{B_\HH\left(0,\frac{1}{R}\right)}}	V^{\frac{-1}{p-1}}K |\xi|_\HH^{\frac{-2p}{p-1}}\psi \,d\xi &\leq  C  \int_{B_\HH\left(0,\frac{1}{\sqrt R}\right)\backslash \overline{B_\HH\left(0,\frac{1}{R}\right)}}|\xi|_\HH^{\alpha^--\frac{a+2p}{p-1}}	\psi\,d\xi\\
&= C  \int_{\frac{1}{R}}^{\frac{1}{\sqrt R}} \rho^{Q-1+\alpha^--\frac{a+2p}{p-1}}\,d\rho\\
&=C\int_{\frac{1}{R}}^{\frac{1}{\sqrt R}}\rho^{-1}\,d\rho\\
&=C\ln R,
\end{aligned}
$$
which implies by \eqref{HHH} that 
$$
\int_{\partial B_\HH}f\frac{\psi}{|\nabla_{\RR^{2N+1}}|\xi|_\HH|}\,{{dH_{2N}}}\leq C\left(T^{-\frac{kp}{p-1}}\int_{B_H\backslash \overline{B_\HH\left(0,\frac{1}{R}\right)}}V^{\frac{-1}{p-1}}K\psi\,d\xi+(\ln R)^{\frac{-1}{p-1}}\right).	
$$
Finally, proceeding as in the proof of Theorem \ref{T1}, we reach a contradiction with $f\in L^{1,+}(\partial B_\HH)$.  Consequently, if $f\in L^{1,+}(\partial B_\HH)$ and \eqref{cd-bl-V} holds, then \eqref{P}--\eqref{BC} admits no weak solution. This completes the proof of part (I) of Theorem \ref{T2}. \\[4pt]
(II) We first consider the case 
\begin{equation}\label{case1-ex}
\lambda>-\left(\frac{Q-2}{2}\right)^2.
\end{equation}
Let us consider the polynomial function
$$
P(\tau):=-\tau^2+(Q-2)\tau+\lambda,\quad \tau\in \RR.
$$
Let 
$$
\tau_1:=Q-2+\alpha^-\quad\mbox{and}\quad \tau_2:=Q-2+\alpha^+.
$$
By \eqref{case1-ex}, it can be easily seen that $\tau_1$ and $\tau_2$ are the roots of $P(\tau)$ with $\tau_1<\tau_2$. Let $\tau$ and $\varepsilon$ be two real numbers satisfying:
\begin{equation}\label{tau-ppts}
\tau_1<\tau<\min\left\{\frac{a+2}{p-1},\tau_2\right\}	
\end{equation}
and
\begin{equation}\label{eps-ppts}
0<\varepsilon<[P(\tau)]^{\frac{1}{p-1}}.	
\end{equation}
Notice that due to \eqref{cd-ex-V}, one has $\tau_1<\frac{a+2}{p-1}$, which shows that the set of $\tau$ satisfying \eqref{tau-ppts} is nonempty. Furthermore, for all $\tau$ satisfying \eqref{tau-ppts}, one has $P(\tau)>0$, which implies that the set of $\varepsilon$ satisfying \eqref{eps-ppts} is nonempty. We consider functions of the form 
\begin{equation}\label{usol1}	
u(\xi):=\varepsilon \rho^{-\tau},\quad \xi\in B_\HH\backslash\{0\},\,\, \rho=|\xi|_\HH. 
\end{equation}
Making use of \eqref{lap}, after elementary calculations we get
\begin{equation}\label{calc-jl}
-\frac{1}{\psi}\Delta_\HH u+\frac{\lambda}{|\xi|_\HH^2}u=\varepsilon P(\tau)\rho^{-\tau-2},\quad 0<\rho=|\xi|_\HH<1.
\end{equation}
Then, it follows from \eqref{tau-ppts}, \eqref{eps-ppts}, \eqref{usol1} and \eqref{calc-jl} that for all $\xi\in B_\HH\backslash\{0\}$,
$$
\begin{aligned}
-\frac{1}{\psi}\Delta_\HH u+\frac{\lambda}{|\xi|_\HH^2}u&= \left(\rho^a \varepsilon^p  \rho^{-\tau p}\right) \varepsilon^{1-p}P(\tau)\rho^{\tau (p-1)-(a+2)}\\
&=V(\xi)u^p(\xi)\varepsilon^{1-p}P(\tau)\rho^{\tau (p-1)-(a+2)}\\
&\geq V(\xi)u^p(\xi). 
\end{aligned}
$$
Furthermore, one has 
$$
u(\xi)=\varepsilon,\quad \xi\in \partial B_\HH. 
$$
Consequently, for all $\tau$ satisfying \eqref{tau-ppts} and $\varepsilon$ satisfying \eqref{eps-ppts}, the function $u$ defined in $B_\HH\backslash\{0\}$ by \eqref{usol1} is a solution to \eqref{P}--\eqref{BC} with $f=\varepsilon$. 

We next consider the case
\begin{equation}\label{case2-ex}
\lambda=-\left(\frac{Q-2}{2}\right)^2.
\end{equation}
For $\beta\in (0,1)$, let us introduce the function 
$$
h_\beta(s):=s^{\frac{(p-1)(Q-2)}{2}-(a+2)}\left(1-\ln s\right)^{-\beta(p-1)-2},\quad 0<s\leq 1. 
$$
Due to \eqref{cd-ex-V} and \eqref{case2-ex}, we have 
$$
\frac{(p-1)(Q-2)}{2}-(a+2)<0, 
$$
which implies that 
$$
\lim_{s\to 0^+}h_\beta(s)=+\infty. 
$$
Consequently, there exists $s_0\in (0,1]$ such that 
$$
h_\beta(s_0)=\min_{0<s\leq 1}h_\beta(s)>0.
$$

For all $\beta\in (0,1)$ and 
\begin{equation}\label{eps2}
0<\varepsilon< \left[\beta(1-\beta)h_\beta(s_0)\right]^{\frac{1}{p-1}}, 
\end{equation}
we consider functions of the form
\begin{equation}\label{usol2}
u(\xi):=\varepsilon \rho^{\frac{2-Q}{2}}\left(1-\ln \rho\right)^\beta,\quad \xi\in B_\HH\backslash\{0\},\,\, \rho=|\xi|_\HH. 	
\end{equation}
Elementary calculations show that 
\begin{equation}\label{calc-jl2}
-\frac{1}{\psi}\Delta_\HH u+\frac{\lambda}{|\xi|_\HH^2}u=\varepsilon \beta(1-\beta)\rho^{-\frac{Q}{2}-1}\left(1-\ln \rho\right)^{\beta-2},\quad 0<\rho=|\xi|_\HH<1.	
\end{equation}
Then, using  \eqref{eps2}, \eqref{usol2} and \eqref{calc-jl2}, we obtain that for all $\xi\in B_\HH\backslash\{0\}$,
$$
\begin{aligned}
-\frac{1}{\psi}\Delta_\HH u+\frac{\lambda}{|\xi|_\HH^2}u& =\left(V(\xi) \varepsilon^p \rho^{\frac{(2-Q)p}{2}}\left(1-\ln \rho\right)^{\beta p}\right) \varepsilon^{1-p}\beta(1-\beta)\rho^{\frac{(p-1)(Q-2)}{2}-(a+2)}\left(1-\ln \rho\right)^{-\beta(p-1)-2}\\
&= V(\xi) u^p(\xi) \varepsilon^{1-p}\beta(1-\beta)h_\beta(\rho)\\
&\geq V(\xi) u^p(\xi) \varepsilon^{1-p}\beta(1-\beta)h_\beta(s_0)\\
&\geq  V(\xi) u^p(\xi).
\end{aligned}
$$ 
Furthermore, one has 
$$
u(\xi)=\varepsilon,\quad \xi\in \partial B_\HH. 
$$
Consequently, for all $\beta\in (0,1)$ and $\varepsilon$  satisfying  \eqref{eps2}, the function $u$ defined in $B_\HH\backslash\{0\}$ by \eqref{usol2} is a solution to \eqref{P}--\eqref{BC} with $f=\varepsilon$. The proof of part (II) of Theorem \ref{T2} is then completed.  \hfill $\square$.

\section*{Declaration of competing interest}
	The authors declare that there is no conflict of interest.

\section*{Data Availability Statements} The manuscript has no associated data.

\section*{Acknowledgments}
MJ is supported by Researchers Supporting Project number (RSP2024R57), King Saud University, Riyadh, Saudi Arabia.
MR and BT were supported by the FWO Odysseus 1 grant G.0H94.18N: Analysis and Partial Differential Equations and by the Methusalem programme of the Ghent University Special Research Fund (BOF) (Grant number 01M01021). MR is also supported by EPSRC grant EP/R003025/2 and EP/V005529/1. BS is supported by Researchers Supporting Project number (RSP2023R4), King Saud University, Riyadh, Saudi Arabia. BT is also supported by the Science Committee of the Ministry of Education and Science of the Republic of Kazakhstan (Grant No. AP14869090).

\end{document}